\numberwithin{equation}{section}
\patchcmd{\section}{\scshape}{\bfseries}{}{}
\theoremstyle{plain}
\declaretheorem[parent=section]{theorem}
\declaretheorem[sibling=theorem]{proposition}
\declaretheorem[sibling=theorem,style=remark]{remark}
\declaretheorem[sibling=theorem,style=definition]{definition}
\declaretheorem[sibling=theorem,style=definition]{condition}
\DeclareSymbolFont{bbold}{U}{bbold}{m}{n}
\DeclareSymbolFontAlphabet{\mathbbold}{bbold}
\definecolor{bittersweet}{rgb}{1.0, 0.44, 0.37}
\definecolor{electricultramarine}{rgb}{0.05, 0.05, 1.0}
\definecolor{rred}{RGB}{152,0,0}
\definecolor{carminepink}{rgb}{0.92, 0.3, 0.26}
\renewcommand\labelenumi{(\roman{enumi})}
\renewcommand\theenumi\labelenumi
\newcommand*\diff{\mathop{}\!\mathrm{d}}
\newcommand{\Ex}{\mathbb{E}}				  
\newcommand{\Prob}{\mathbb{P}}
\newcommand{\ind}[1]{\mathbbold{1}{\bbra{#1}}} 
\newcommand{\argmin}{\operatorname{argmin}}
\newcommand{\qqqquad}{\quad \quad \quad \quad}
\providecommand{\abs}[1]{\left\vert#1\right\vert}			
\providecommand{\norm}[1]{\left\Vert#1\right\Vert}
\newcommand{\sphere}{\mathbb{S}^2}				         
\newcommand{\s}{\mathbb{S}^2}
\newcommand{\sumltrunc}{\sum_{\ell=0}^{\LL}}
\newcommand{\reals}{\mathbb{R}}					          
\newcommand{\integers}{\mathbb{Z}}
\newcommand{\naturals}{\mathbb{N}}
\newcommand{\Ltwo}{{L^2\bra{\sphere}}}
\newcommand{\Y}{Y_{\ell, m}}    
\newcommand{\ylm}{Y_{\ell, m}}
\newcommand{\alm}{a_{\ell, m}} 
\newcommand{\almZ}{a_{\ell, m;Z}}
\newcommand{\Cl}{C_{\ell}}
\newcommand{\ClZ}{C_{\ell;Z}}
\newcommand{\LegP}{P_{\ell}}
\renewcommand{\theta}{\vartheta}
\newcommand{\Hl}{\mathcal{H}_\ell}
\newcommand{\summ}{\sum_{m=-\ell}^{\ell}}     
\newcommand{\sumlm}{\sum_{\ell\geq 0}\sum_{m=-\ell}^{\ell}}
\newcommand{\primen}{\ell^{\prime}, m^{\prime}}
\newcommand{\Tup}{N}
\newcommand{\sumt}{\sum_{t=p+1}^{\nup}}
\newcommand{\Z}{Z\(x,t\)}
\providecommand{\bra}[1]{\left(#1\right)}			      
\providecommand{\bbra}[1]{\{#1\}}			      
\renewcommand{\(}{\left(}
\renewcommand{\)}{\right)}
\renewcommand{\[}{\left[}
\renewcommand{\]}{\right]}
\renewcommand{\{}{\left\lbrace}
\renewcommand{\}}{\right\rbrace}
\newcommand{\lasso}{\operatorname{lasso}}
\newcommand{\phib}{\boldsymbol{\phi}}
\newcommand{\legp}{P_{\ell}}
\newcommand{\legf}{P_{\ell,m}}
\newcommand{\Phij}{\Phi_j}
\newcommand{\philj}{\phi_{\ell;j}}
\renewcommand{\theta}{\vartheta}
\newcommand{\sumjp}{\sum_{j=1}^{p}}
\newcommand{\Sl}{S_\ell ({\phib}_\ell)}
\newcommand{\kLASSO}{\widehat{\mathbf{k}}^{\lasso}_N}
\newcommand{\PpN}{\mathcal{P}^p_N}
\newcommand{\ka}{\mathbf{k}}
\newcommand{\sumtp}{\sum_{t=p+1}^n}
\providecommand{\TRnorm}[1]{\lVert #1\rVert_{\operatorname{TR}}}
\newcommand{\nup}{n}
\newcommand{\LN}{L_N}
\newcommand{\LL}{\LN-1}
\newcommand{\kvec}{\mathbf{k}}
\newcommand{\phiLASSO}{\widehat{\phib}_{\ell,N}^{\lasso}}
\newcommand{\phiLASSOpr}{\widehat{\phib}_{\ell^\prime,N}^{\lasso}}
\newcommand{\YlN}{\mathbf{Y}_{\ell,N}}
\newcommand{\YlNh}{\mathbf{Y}_{\ell,N}\(h\)}
\newcommand{\ElN}{\mathbf{E}_{\ell,N}}
\newcommand{\XlN}{X_{\ell,N}}
\newcommand{\GammalNest}{\widehat\Gamma_{\ell,N}}
\newcommand{\GammalN}{\Gamma_{\ell}}
\newcommand{\Mf}{\mathcal{M}(f_\ell)}
\newcommand{\mf}{\mathfrak{m}(f_\ell)}
\newcommand{\MM}{\mathcal{M}}
\newcommand{\mumax}{\mu_{\max; \ell}}
\newcommand{\mumin}{\mu_{\min; \ell}}
\newcommand{\gammalN}{\widehat\gamma_{\ell,N}}
\newcommand{\ql}{q_\ell}
\newcommand{\rr}{r}	
\def\R{\mathbb{R}}
\def\P{\mathbb{P}}
\def\C{\mathbb{C}}
\def\N{\mathbb{N}}
\def\Z{\mathbb{Z}}
\def\S{\mathscr{S}}
\def\to{\longrightarrow}
\def\sto{\rightarrow}
\def\1{\mathbbm{1}}
\begin{document}
	\bibliographystyle{alpha}

\author[1]{Alessia Caponera\footnote{E-mail: alessia.caponera@uniroma1.it. }}
\author[2]{Claudio Durastanti\footnote{Corresponding author. E-mail: claudio.durastanti@uniroma1.it. }}
\author[3]{Anna Vidotto\footnote{E-mail: vidotto@mat.uniroma2.it. }}
\affil[1]{Dipartimento di Scienze Statistiche, Sapienza Universit\`a di Roma}
\affil[2]{Dipartimento SBAI, Sapienza Universit\`a di Roma}
\affil[3]{Dipartimento di Matematica, Tor Vergata Universit\`a di Roma}


\title{LASSO estimation for spherical autoregressive processes}


\date{ \today}

\maketitle
\begin{abstract}
The purpose of the present paper is to investigate on a class of spherical functional autoregressive processes in order to introduce and study LASSO (Least Absolute Shrinkage and Selection Operator) type estimators for the corresponding autoregressive kernels, defined in the harmonic domain by means of their spectral decompositions. Some crucial properties for these estimators are proved, in particular, consistency and oracle inequalities.\\
\textit{Keywords:} spherical functional autoregressions, spherical harmonics, LASSO method, kernel estimation, stability, consistency, oracle inequalities\\
\textit{2000 MSC:} primary 62M15; secondary 62M10, 60G15, 60F05, 62M40, 60G60.
\end{abstract}
\maketitle

\section{Introduction}\label{sec:intro}

\subsection{Background and motivations}\label{sub:background}
In recent years, growing attention has been paid to space-time processes built over various domains as, for example, regular grids, Euclidean spaces, and Riemannian manifolds (see, among others, \cite{JZ:97,gneiting,kulik,porcu,clarke,cm19,BINGHAM2019}). In particular, space-time random fields defined over the two-dimensional unit sphere $\s$ find a wide set of applications in Cosmology, Geophysics, and Medical Imaging, providing a tool to perform analysis of data evolving with time and distributed over the sphere representing, for instance, either the Universe (see \cite{MP:11}), the planet Earth (see \cite{christakos}) or the human brain (see \cite{tk}). Moreover, statistical techniques dealing with the analysis of -- both purely spatial and depending on time -- spherical random fields have received considerable attention, for example, in \cite{bkmpAoSb,mal,langschwab,fan} and in \cite{dmp,cm19}, respectively. 

In the last two decades, classes of space-time covariance functions has been defined and examined in the perspective to build the so-called sphere-cross-time random fields. The reader is referred to \cite{clarke, gneiting, jun, porcu, steinst} and the references therein for some neat examples. Another construction involving sphere-cross-time random fields has been presented in \cite{dmp} (see also \cite{bdmp}), where quantitative central limit theorems for linear and non-linear statistics based on spherical time-dependent Poisson random fields have been established.\\

A sphere-cross-time random field is denoted by a collection of random variables 
$$\{T\( x,t\):\(x,t\) \in \s\times\integers\},$$
and it can be described in the so-called frequency domain by its harmonic expansion
\begin{equation}\label{eq:randomfield}
T\(x,t\)= \sumlm \alm\(t\) \ylm\(x\), \quad \(x,t\)\in\s \times \integers \,,
\end{equation}
where $\{\ylm:\ell \geq 0; m=-\ell,\ldots,\ell\}$ is the standard orthonormal basis of real spherical harmonics for $\Ltwo=L^2\(\s,\diff x\)$, the space of square-integrable functions on the sphere with respect to the spherical Lebesgue measure $\diff x$ (see Section \ref{sec:setting} and the references \cite{steinweiss,yadrenko}). Harmonic analysis has already been proved to be a valid tool to perform statistical analysis on the sphere (see, for instance, \cite{dlm,kimkoo}) and allows one to describe a random field as the linear combination of spherical harmonics, weighted by the corresponding time-varying harmonic coefficients (see \cite{BINGHAM2019}). The stochastic information of the random field $T$ is then contained in the set of harmonic coefficients $\{\alm\(t\):\ell \geq 0; m=-\ell,\ldots,\ell\}$, given by 
\begin{equation*}\label{eq:alm}
\alm\(t\) = \langle T\(\cdot,t\), \ylm \rangle _{\s}\,,
\end{equation*}
where $\langle \cdot, \cdot \rangle _{\s}$ is the standard inner product over the sphere.\\

In this paper, the object of study is a class of sphere-cross-time random fields, introduced in the literature by \cite{cm19}, which are functional autoregressive processes defined over $\Ltwo$ (see also \cite{bosq} and the references therein). As the name suggests, the spherical autoregressive model of order $p$, from now on shortened to $\operatorname{SPHAR}(p)$, specifies the output field $T(\cdot, t)$ as an infinite-dimensional linear transformation of its $p$ previous realizations ($p$ lags) added to an independent \emph{spherical white noise} $Z\(\cdot,t\)$. More formally, the 
$\operatorname{SPHAR}(p)$ equation is given by 
\begin{equation}\label{eq:arp}
T\(x,t\)= \sum_{j=1}^{p} \(\Phij T\(\cdot,t-j\)\)\(x\) + Z\(x,t\)\,, \qquad \(x,t\) \in \s\times\Z \,,
\end{equation}
where, for $j=1,\ldots,p$, the autoregressive kernel operator $\Phij:\Ltwo\rightarrow\Ltwo$ is defined as  
\begin{equation*}
\(\Phij f\) \(x\) = \int_{\s} k_j \(\langle x , y \rangle\) f\(y\) \diff y, \quad f \in \Ltwo,
\end{equation*}
with $k_j: \[-1,1\]\rightarrow \reals$ the corresponding autoregressive kernel, assumed to be continuous.  
Then, for $j=1,\ldots, p$, the estimation of $\Phij$ can be reduced to the one of $k_j$. Note that, for any $j = 1,\ldots, p$, the kernel $k_j$ is isotropic, that is, it depends only on $z=\langle x,y\rangle $, the standard inner product on $\R^3$. As a consequence, the following spectral representation holds in the $L^2$-sense
\begin{align}\label{eq:kj-dec}
k_j\(z\)=& \sum_{\ell\geq 0} \philj \frac{2\ell+1}{4\pi}\legp \(z\), 
\end{align} 
where $\legp:\[-1,1\]\rightarrow \reals$ is the Legendre polynomial of order $\ell$, and, for $j=1,\ldots,p$, the coefficients $\{\philj:\ell \geq 0\}$ are
the eigenvalues of the operator $\Phij$. 
The reader is referred to Section \ref{sec:setting} for a detailed discussion. Moreover, in line with \cite{cm19}, the random field $Z$ in \eqref{eq:arp} is defined such that, for any $x,y \in \s$, the covariance function $\Gamma_Z\(x,y\)=\Ex\[Z\(x,t\)Z\(y,t\)\]$ is isotropic and independent on the choice of $t \in \integers$ (see \cite{yadrenko}).\\
As a standard consequence of the so-called \emph{duplication property} for spherical harmonics, 
we can characterize functional spherical autoregressions in the frequency domain as 
\begin{equation}\label{eq:auteq}
\alm\(t\) = \sumjp \philj \alm \(t-j\) + \almZ \(t\).
\end{equation}     
In \cite{cm19}, estimators for the kernels $\{k_j: j=1,\ldots,p\}$ have been defined according to a functional  $L^2$-minimization criterion, exploiting their spectral decomposition \eqref{eq:kj-dec}; some asymptotic properties, such as consistency, quantitative central limit theorem, and weak convergence, have then been established under additional regularity conditions. \\

In this work, we approach the estimation of the autoregressive kernels under a \emph{sparsity} assumption, that is, for any $j=1,\ldots,p$, we assume that only a few of the components of $k_j$ in \eqref{eq:kj-dec} are non-zero (see Definition \ref{cond:sparsity} below). Assuming sparsity conditions can lead to considerable advantages in estimation problems (see, for example, \cite{tibshibook1}). In this case, the proper identification of the null components allows to perform regularization on the functional estimates, preserving accuracy; moreover, sparsity enhances computational efficiency. 

LASSO - or $\ell_1$-regularized - regression, introduced in the statistical literature by the celebrated paper \cite{tibshi}, is among the most popular penalization techniques to estimate sparse models. In particular, it corrects the $L^2$-loss for sparse models by adding a convex penalty term and, then, constraining the estimation process and selecting the most significant variables. 
In the framework of independent and identically distributed ($\text{i.i.d.}$) observations, LASSO has been proved to be extremely efficient both from the point of view of theoretical properties and in terms of applications  (see \cite{tibshibook2,wainwright2019} and references therein).
The connections between LASSO, ridge regression, best subset selection and other $\ell_q$-based penalization methods, as well as further links between LASSO and other nonparametric statistical techniques, such as soft and hard thresholding, have been widely investigated, for instance, in \cite{bvdg,tibshibook1}. 

Applications of LASSO in the framework of time series and stochastic processes represent a much more recent development. A pioneering contribution in this area has been given in \cite{BM:15}, where the authors explore the properties of $\ell_1$-regularized estimators in the settings of stochastic regression with serially correlated errors and vector autoregressive (VAR) models (see also \cite{daviszangzheng,songbickel} for related ideas). 
Their results can be seen as a successful extension of the standard LASSO technique to the framework of non-$\text{i.i.d.}$ observations. 
More specifically, in \cite{BM:15}, under sparsity constraints, $\ell_1$-regularized estimators have been investigated by introducing a measure of stability for stationary processes, a very powerful tool to study the correlation structure of multivariate processes, and crucial to settle some useful deviation bounds for dependent data. 
%
%
Further details on the stability of autoregressions can be found, among others, in \cite{brockwelldavis,stablelutti} as well as in \cite{bosq} for the functional case (see also Section \ref{sec:LASSO}). In turn, these deviation bounds are instrumental to establish concentration properties of the estimators, and so-called \emph{oracle inequalities}.\\

The aim of this work is to define and study LASSO-type estimators for spherical autoregressive kernels under sparsity assumptions. 
In line with \cite{cm19}, our approach does not require any specific functional form for the kernel $k_j$; in this sense, the estimation procedure can be viewed as fully nonparametric, see also \cite{prakasa,tsybakov}. It is important to stress that, given the nonparametric nature of the model \eqref{eq:arp}, we are dealing with a \textit{functional} penalized regression problem, hence differing from the framework of $\operatorname{VAR}(p)$ processes, where estimators assume a vectorial form (see also Remark \ref{rmk:highdim}). More specifically, our oracle inequalities will involve functions rather than scalar or vectorial parameters (see Section \ref{sub:fmr}).
Exploiting the harmonic expansion for the spherical autoregressions \eqref{eq:arp} and the isotropy assumption on $\{k_j : j = 1, \ldots, p\}$ in \eqref{eq:kj-dec}, together with an extension of the concept of stability measure introduced in \cite{BM:15}, we will be able to establish concentration properties in functional norms for the autoregressive kernels (see Section \ref{sec:MR}). Moreover, the sparsity enforcement properties of LASSO procedures will avoid overfitting and will select only the most relevant components of each kernel function spectral decomposition \eqref{eq:kj-dec} .

\subsection{Background and main results}\label{sub:fmr}
First of all, let us set some standard notation, necessary in order to state our main findings. For two real valued sequences $\{a_n\}_{n \in \mathbb{N}}, \{b_n\}_{n \in \mathbb{N}}$, we write $a_n \succeq b_n$ if
there exists an absolute constant $c$, which does not depend on the model parameters, such that $a_n \geq c\, b_n$, for all $n\in \N$. 
For a vector $v \in \mathbb{R}^d$, $\norm{v}_{q}$ denotes the $\ell_q$-norm of $v$,  $$\norm{v}_{q}=\left(\sum_{i=1}^d\abs{v_i}^q\right)^{\frac{1}{q}}; \quad \norm{v}_{0}=\sum_{i=1}^d \ind{v_i\neq 0}; \quad \norm{v}_{\infty} = \max_{i=1,\ldots,d} \abs{v_i},$$
for $0<q<\infty$, $q=0$ and $q= \infty$ respectively. We say that $v$ is a $r$-sparse vector, $1\le r \le d$, if $\norm{v}_{0}=r$.
Unless stated otherwise, for the sake of simplicity, $\norm{\cdot}$ denotes the $\ell_2$-norm of $v$. Let $f:\[-1,1\]\rightarrow \reals^p$, $f\in L^q \left(\[-1,1\], \rho\(\diff z\)\right)$, where $\rho\(\diff z\)$ is the Lebesgue measure over $\[-1,1\]$. Then, for $1\le q< \infty$, the $L^q$-norm of $f$ is given by $$\norm{f}_{L^q} = \(\int_{-1}^{1} \norm{f\(z\)}^q\rho\(\diff z\)\)^{\frac{1}{q}}.$$ Analogously, the $L^\infty$-norm of $f$ is given by $\norm{f}_{L^\infty}= \sup_{z \in\[-1,1\]}\norm{f\(z\)}$. The spherical $L^q$-norms are defined by $$\norm{f}_{L^q\(\s\)} = \(\int_{\s} \abs{f\(x\)}^p \diff x\)^{\frac{1}{q}}, \quad f \in L^q\(\s\).$$
Finally, the Hilbert-Schmidt and the trace class norms of a compact self-adjoint operator $\mathscr{T}:\mathbb{H}\rightarrow \mathbb{H}$, where $\mathbb{H}$ is a separable Hilbert space, are given respectively by 
$$
\norm{\mathscr{T}}_{\operatorname{HS}} = \sum_{i \geq 0} \abs{\lambda_i}^2; \quad \TRnorm{\mathscr{T}} = \sum_{i \geq 0} \abs{\lambda_i},
$$
where $\{\lambda_i\}_{i \in \naturals}$ are the eigenvalues of $\mathscr{T}$ (see, for example, \cite{Hsing}).  \\

We provide now the definition of sparsity set, which can be understood as an instrumental characterization of sparsity. 
\begin{definition}[Sparsity set]\label{cond:sparsity}
	For any $\ell \geq 0$ and $\phib_\ell = \(\phi_{\ell;1},\ldots,\phi_{\ell;p}\)$, we define $\ql = \norm{\phib_\ell}_0$ the $\ell$-th sparsity index, which satisfies $0 \le q_\ell \le p$. We call $\{q_\ell: \ell \ge 0\}$ the sparsity set.
\end{definition}
\begin{remark}\label{rem:remarkability}
	Following \cite{bosq,cm19}, to ensure identifiability we assume that there exists at least one $\ell \geq 0$
	such that $\phi _{\ell ;p}\neq 0$, so that $\P\(\Phi _{p}T\(\cdot,t\)\neq 0\)>0$, for all $t\in \mathbb{Z}$. As a consequence, for some $\ell\ge 0$, we can have $\phib_\ell = 0$ and hence $\norm{\phib_\ell}_0  = q_\ell = 0$; however, $q = \max_{\ell \ge 0} q_\ell \ge 1$. Note that, in the simplest case $p=1$, the sparsity set gives information about the null components of the single kernel.
\end{remark}

Set the sequence of polynomials $\phi _{\ell }:\mathbb{C\rightarrow C}$, $\ell \geq 0$, associated with the subprocesses defined in \eqref{eq:auteq}, as follows
\begin{equation}
\phi_{\ell}(z)= 1- \phi _{\ell;1}z-\cdots -\phi _{\ell ;p}z^{p}.
\label{AssoPoly}
\end{equation}  

We must introduce some conditions on the model, essential to achieve our results. 
As stated also in \cite{cm19}, Condition \ref{cond:identify} will require that the eigenvalues of the covariance operator associated with $\Gamma_Z(\cdot, \cdot)$ are positive (see \cite[Section 7.3]{Hsing}).
Condition \ref{cond:stazionarity} will guarantee the existence of a unique isotropic and stationary solution for \eqref{eq:arp}. 
For rigorous proofs, the reader is referred to \cite{bosq, Cathesis}. 

\begin{condition}[Identifiability]\label{cond:identify}
	Let $Z\(x,t\)$ be the spherical white noise used in \eqref{eq:arp}. It holds that 
	\begin{equation*}\label{eq:identify}
	\int_{\s\times \s}\Gamma_Z \(x,y\)f\(x\)f\(y\) \diff x \diff y>0,  
	\end{equation*} 
	for any $f \in \Ltwo$ such that $f\(\cdot\)\neq 0$.
\end{condition}

\begin{condition}[Stationarity]
\label{cond:stazionarity} The sequence of polynomials \eqref{AssoPoly} is such
that $$|z| \le 1 \ \Rightarrow \ \phi _{\ell }(z)\ne 0.$$ More explicitly,
there are no roots in the unit disk, for all $\ell \geq 0$.
\end{condition}

\begin{remark}\label{rmk:infinf}
Condition \ref{cond:stazionarity}, together with the spectral decomposition \eqref{eq:kj-dec} for the kernels $\{k_j, j=1,\dots,p\}$, implies that, if $\xi_{\ell;1},\dots,\xi_{\ell; d_\ell}$ are the roots of the $d_\ell$-degree
polynomial \eqref{AssoPoly}, $1 \le d_\ell \le p$, then
\begin{equation*}
|\xi_{\ell;j}| \ge \xi_\ast > 1,
\end{equation*}
uniformly over $\ell$. In other words, there exists $\delta >0$ such that $$|z| < 1 + \delta \ \Rightarrow \ \phi _{\ell }(z)\ne 0,\qquad \text{for all } \ell \geq 0.$$
\end{remark}

\begin{condition}[Smoothness]\label{cond:smooth}
For all $j=1,\dots,p$, we have that
\begin{equation}
\| \Phi_j \|_{\operatorname{TR}} = \sum_{\ell=0}^\infty (2\ell+1)|\phi_{\ell;j}| < \infty, 
\end{equation}
that is, $\Phi_j$ is a nuclear operator, see again \cite{Hsing}.
\end{condition}


As experimental setting, for any $\ell \geq 0$, we assume that the harmonic coefficients $\{\alm\(t\): m=-\ell,\ldots,\ell\}$ can be observed over a finite set of times $\{1,\ldots,n\}\subset\integers$. The vector of functions
$$\ka=\(k_1,\ldots,k_p\)$$ 
contains all the autoregressive kernels described above. 
We will focus on the following penalized minimization problem:
\begin{equation}\label{eq:aiuto}
\kLASSO =  \underset{\ka \in \PpN}{\argmin}  \, \frac1N \sumtp \norm{ T(\cdot, t) - \sumjp \Phij(T(\cdot, t-j)) }^2_{\Ltwo} +\lambda \sumjp\TRnorm{\Phij},
\end{equation}
where $N=n-p$ can be read as the effective number of observations, and $\lambda \in \reals^+$ is the penalty parameter. As well as in \cite{cm19}, the space $\PpN$ is the Cartesian product of $p$ copies of 
\begin{equation}\label{eq:PpN}
\operatorname{span}\{\frac{2\ell+1}{4\pi}\legp\(\cdot\): \ell=0,\ldots,L_N-1\}\,,
\end{equation}
where the integer $L_N>0$ is the truncation level, which corresponds to the frequency of the highest component in \eqref{eq:kj-dec} estimated by \eqref{eq:aiuto}, see Section \ref{lasso_est} for a detailed discussion. Let us also define
\begin{equation*}
k_{j, N}(z)=\sum_{\ell=0}^{\LL} \phi_{\ell;j} \frac{2\ell+1}{4\pi} P_\ell(z)
\end{equation*}
and, accordingly, $\mathbf{k}_N=(k_{1, N},\dots,k_{p, N})$. 
\\

Our main result is extensively stated in Theorem \ref{th:MR-preciso} and can be compactly formulated as follows. 
\begin{theorem}\label{th:summarize} Consider the estimation problem \eqref{eq:aiuto}, assume that Conditions \ref{cond:identify} and \ref{cond:stazionarity} hold, and suppose that
	\begin{equation*}\label{eq:DB}
	N \succeq b_1 \, q \log \(p \LN\),
	\end{equation*}
	where $q=\max_{\ell \ge 0} q_\ell$.
	Then, for any penalty parameter $\lambda = \lambda_N   \ge b_2 \sqrt{\frac{\log \(p \LN\)}{N}}$, the solution $\kLASSO$ of \eqref{eq:aiuto} satisfies 
	\begin{align}\label{eq:th1weak}
	&\Prob \(  \norm{\kLASSO - \ka }^2_{L^2} \le \frac{18}{\pi^2}\, \lambda_{N}^2\,\sumltrunc \frac{q_\ell}{\alpha_\ell^2} \,(2\ell+1)+\Big \|\ka - \ka_{N} \Big \|^2_{L^2} \)\ge 1-c_1e^{- c_2 \log \(p \LN\)} ,
	\end{align}
	where the sequence $\{\alpha_\ell:\ell=0,\ldots,\LL\}$ is defined in \eqref{anna}, and $c_1,c_2 >0$ are absolute constants. Moreover, under the additional Condition \ref{cond:smooth}, it holds that
	\begin{equation}
	\Prob \( \norm{\kLASSO - \ka }_{L^\infty} \le \frac{3}{\pi}\, \lambda_{N} \sumltrunc \frac{\sqrt{q_\ell}}{\alpha_\ell} \(2\ell+1\)+\Big \| \ka - \ka_{N} \Big \|_{L^\infty} \)
	\label{eq:th1strong}
	\ge 1-c_1 e^{-c_2 \log \(p \LN\)}.
	\end{equation} 
\end{theorem}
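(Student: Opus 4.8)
The plan is to use isotropy together with the duplication property for spherical harmonics to reduce the functional penalized problem \eqref{eq:aiuto} to a family of \emph{decoupled} weighted LASSO regressions, one per frequency $\ell$, to prove a standard oracle inequality for each, and finally to reassemble the frequency-wise errors into the two functional norms appearing in \eqref{eq:th1weak}--\eqref{eq:th1strong}.

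First I would pass to the harmonic domain. By Parseval's identity in the orthonormal basis $\{\ylm\}$, the squared $\Ltwo$-loss in \eqref{eq:aiuto} equals $\sumtp\sumltrunc\summ\abs{\alm(t)-\sumjp\philj\alm(t-j)}^2$, and, since the eigenvalue $\philj$ of $\Phij$ has multiplicity $2\ell+1$, the trace-class penalty reads $\lambda\sumjp\TRnorm{\Phij}=\lambda\sumltrunc(2\ell+1)\norm{\phib_\ell}_1$. Hence \eqref{eq:aiuto} separates across $\ell=0,\dots,\LL$ into
\[
\phiLASSO=\argmin_{\phib_\ell}\;\sumtp\summ\Big|\alm(t)-\sumjp\philj\alm(t-j)\Big|^2+\lambda_N(2\ell+1)\norm{\phib_\ell}_1,
\]
each a $p$-dimensional LASSO for the autoregressive vector $\phib_\ell$ with innovations $\almZ(t)$; after normalising by $2\ell+1$ and $N$ the effective penalty is $\lambda_N$ and the design is the empirical Gram matrix $\GammalNest$.

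Next I would run the usual deterministic LASSO argument at each frequency. Writing the basic inequality for $\phiLASSO$ and choosing $\lambda_N$ to dominate twice the sup-norm of the empirical score $\summ\sumtp\almZ(t)\,\alm(t-j)$, the error $\phiLASSO-\phib_\ell$ is confined to the cone supported on the $q_\ell$ active indices; combining this with a restricted-eigenvalue (curvature) lower bound $\alpha_\ell$ for $\GammalNest$, defined in \eqref{anna}, yields the frequency-wise bound $\norm{\phiLASSO-\phib_\ell}\le 12\,\lambda_N\sqrt{q_\ell}/\alpha_\ell$ together with its $\ell_1$ companion. I then transfer these to functional norms. Orthogonality of the Legendre polynomials, $\int_{-1}^{1}\legp(z)^2\,\rho(\diff z)=2/(2\ell+1)$, gives the exact split $\norm{\kLASSO-\ka}^2_{L^2}=\frac{1}{8\pi^2}\sumltrunc(2\ell+1)\norm{\phiLASSO-\phib_\ell}^2+\norm{\ka-\ka_N}^2_{L^2}$, the two pieces living on disjoint frequency ranges; inserting the per-frequency bound produces the constant $144/(8\pi^2)=18/\pi^2$ and hence \eqref{eq:th1weak}. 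Likewise $\abs{\legp(z)}\le 1$ and Minkowski's inequality give $\norm{\kLASSO-\ka}_{L^\infty}\le\frac{1}{4\pi}\sumltrunc(2\ell+1)\norm{\phiLASSO-\phib_\ell}+\norm{\ka-\ka_N}_{L^\infty}$, with $12/(4\pi)=3/\pi$, which is \eqref{eq:th1strong}.

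The deterministic algebra above is routine; the main obstacle is the two probabilistic ingredients, which must hold \emph{simultaneously} over all $\ell\le\LL$ and lags $j\le p$: the deviation bound for the empirical score and the restricted-eigenvalue lower bound for $\GammalNest$. Because the coefficients $\{\alm(t)\}$ are serially dependent solutions of \eqref{eq:auteq}, these cannot be read off from i.i.d. concentration; instead I would adapt the stability-measure machinery of \cite{BM:15}, controlling its spectral quantities uniformly in $\ell$ by means of Remark \ref{rmk:infinf}, which keeps the roots of $\phi_\ell$ bounded away from the unit disk uniformly in $\ell$. A union bound over the $p\LN$ coordinates---valid exactly when $N\succeq b_1\,q\log(p\LN)$---then yields the common event of probability at least $1-c_1e^{-c_2\log(p\LN)}$ on which both estimates hold. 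Verifying that a \emph{single} sample-size threshold and a \emph{single} penalty level $\lambda_N$ simultaneously control the sub-exponential tails of the score and of $\GammalNest$ across every frequency is the technically demanding point.
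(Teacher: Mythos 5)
Your proposal is correct and follows essentially the same route as the paper: decoupling \eqref{eq:aiuto} across multipoles via Parseval and the trace-norm identity, running the standard basic-inequality/cone/restricted-eigenvalue argument per frequency to get $\norm{\phiLASSO-\phib_\ell}_2\le 12\sqrt{q_\ell}\,\lambda_N/\alpha_\ell$, reassembling through Legendre orthogonality (yielding exactly the constants $18/\pi^2$ and $3/\pi$), and establishing the deviation and RE conditions simultaneously over all $\ell<\LN$ by adapting the stability-measure bounds of \cite{BM:15} with a union bound under $N\succeq b_1\,q\log(p\LN)$. The paper carries out the step you flag as technically demanding in its Propositions \ref{DevCond} and \ref{RE_thm} (built on Proposition \ref{prop:DevBound}), exploiting the $2\ell+1$ replicated processes per multipole, but the architecture is the one you describe.
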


\begin{remark}\label{remark:constants}
The constants $b_1$ and $b_2$ depend on the model. In particular, $b_1=\max\{\omega^2,1\}$ and $b_2=4\mathcal{F}$, where $\mathcal{F}, \omega >0$ are tightly connected to the stability measure introduced in Section \ref{sub:stameasure}.
The reader is referred to Section \ref{sub:bounds} for further details and comments.
\end{remark}

Our findings provide upper bounds for the $L^2$- and the $L^\infty$-distances between the LASSO-type estimator $\kLASSO$ and the ``true'' $\mathbf{k}$. These upper bounds consist of the sum of two terms. The first summand represents the error due to the approximation of the first $\LN$ components of $\mathbf{k}$ with $\kLASSO$. The second one arises because  $\kLASSO$ provides an estimation of $\mathbf{k}$ truncated at the multipole $\LN$. In this sense, we can draw an analogy with standard nonparametric statistics and refer to them as the stochastic and the bias error, respectively (see \cite{prakasa}).   

The upper bounds are non-asymptotic and they hold with high-probability, in the sense that, for a fixed $N$ sufficiently large, the probability on the left side of \eqref{eq:th1weak} and \eqref{eq:th1strong} is arbitrarily close to $1$. An appropriate choice of $\LN$ leads both the upper bounds to converge to $0$ and their probabilities to converge to $1$, as $N \sto \infty$; as a consequence, our result can be also read in terms of asymptotic consistency. 

\subsection{Plan of the paper}\label{sub:plan}
This paper is organized as follows.  In Section \ref{sec:LASSO}, we present the LASSO estimators for spherical autoregressive kernels under sparsity assumptions. Section \ref{sec:MR} contains the main results of this work, that is, how the classical LASSO-scheme fits in our setting, using the concept of stability measure, as well as our oracle inequalities. In Section \ref{sec:simulations} we briefly show the performance of the LASSO estimators under sparsity assumptions. Finally, Section \ref{sec:proofs} collects the proofs. 

\section{General setting}\label{sec:setting}
\subsection{Harmonic analysis on the sphere and space-time spherical random fields}
This Section includes some well-established results concerning harmonic analysis on the sphere and provides the reader with an overview of the construction of space-time spherical random fields.\\
The reader is referred to \cite{ steinweiss,vilenkin,MP:11,langschwab} and the references therein for further details concerning harmonic analysis on the sphere and spherical random fields. Sphere-cross-time random fields have been discussed, among others, in \cite{gneiting, jun, porcu, steinst}.\\

Let us start by setting some useful notation and some important concepts related to the harmonic analysis on the sphere. 
Each point $x$ on the sphere is identified by two angular coordinates, that is, $x=\(\theta,\varphi\)$, where $\theta \in \[0,\pi\]$ and $\varphi \in \[\left.0,2\pi \)\right.$ are the colatitude and the longitude, respectively. The spherical Lebesgue measure is labeled by $\diff x=\sin\theta \diff\theta \diff\varphi$, while $\Ltwo=L^2\(\s,\diff x \)$ describes the space of square-integrable functions over the sphere with respect to the measure $\diff x$.\\
Following for example \cite{MP:11, steinweiss, vilenkin}, we denote by $\{\Hl: \ell\geq 0\}$ the set of spaces of homogeneouos harmonic polynomials of degree $\ell$ restricted to $\s$. For any $\ell\geq 0$, $\Hl$ is spanned by the set of spherical harmonics $\{\ylm :m=-\ell,\ldots,\ell \}$. The index $\ell \in \naturals$ is the so-called multipole, while $m=-\ell, \ldots,\ell$ is the ``azimuth'' number. The set $\{\Hl:\ell \geq 0\}$ is dense in $\Ltwo$ (see again \cite[Proposition 3.33, p.73]{MP:11}), thus the following decomposition holds 
\begin{equation*}
\Ltwo=\bigoplus_{\ell\geq 0}\Hl 
\end{equation*}
and spherical harmonics provide an orthonormal basis for $\Ltwo$. For the sake of simplicity, here we will make use of the so-called real spherical harmonics. For any $\ell \in \naturals$ and $m =-\ell,\ldots, \ell$, the spherical harmonic $\ylm$ can be written as the normalized product of the Legendre associated function $\legf: \[-1,1\]\rightarrow \reals$ of degree $\ell$ and order $m$, which depends only on the colatitude $\theta$, and a trigonometric function depending only on the longitude $\varphi$, namely,
\begin{align*}
\ylm \(\theta, \varphi\)= \{ \begin{matrix*}[l]
\sqrt{\frac{\(2\ell +1\)}{2 \pi} \frac{\(\ell-m\)!}{\(\ell+m\)!}} \legf \(\cos \theta \)\cos\(m \varphi\) &\text{for } m \in \{1,\ldots, \ell \}\\
\sqrt{\frac{\(2\ell +1\)}{4 \pi}} \legp  \(\cos \theta \) &\text{for } m = 0 \\
\sqrt{\frac{ \(2\ell +1\)}{2 \pi} \frac{\(\ell+m\)!}{\(\ell-m\)!}} P_{\ell,-m} \(\cos \theta \)\sin\(-m\varphi\) &\text{for } m \in \{-\ell,\ldots, -1 \}
\end{matrix*} \right.,
\end{align*}
where
\begin{equation*}
\legf \(u\)=\frac{1}{2^\ell \ell !} \(1-u^2\)^{\frac{m}{2}}\frac{\diff^{\ell + m }}{\diff u^{\ell+m}} \(u^2-1\)^{\ell}, \quad u \in \[-1,1\]. 
\end{equation*}
It is a well-known fact that the following addition formula holds
\begin{equation*}\label{eq:addition}
\summ \ylm\(x\)\ylm\(y\)=\frac{2\ell+1}{4\pi}\legp \(\langle x,y\rangle\), \quad x,y\in \s,
\end{equation*}
where $\legp$ is the Legendre polynomial of order $\ell$, given by
$$
\legp\(u\)=\frac{1}{2^{\ell}\ell !} \frac{\diff ^ \ell}{\diff u ^ \ell} \(u^2-1\)^\ell, \quad u \in \[-1,1\].
$$
Moreover, Legendre polynomials are orthogonal on $L^2([-1,1])$, that is,
\begin{equation}\label{eq:duplication}
\int_{-1}^1\legp \(u\)P_{\ell^\prime} \(u\) \, \diff u=\frac{2}{2\ell+1}\delta_{\ell}^{\ell^\prime}\,.
\end{equation}
\\

Recall that we are considering a sphere-cross-time random fields, which is a real-valued collection of random variables $$\{T\(x,t\):\(x,t\)\in \s \times \integers \}.$$ 
\begin{remark} All the results presented in this paper can be easily extended to the case of complex-valued spherical random fields, after a proper definition of complex spherical harmonics (see, for example, \cite[Theorem 5.13, p.123]{MP:11}).
\end{remark}
From now on, we will consider real-valued, centered, mean-square continuous, Gaussian random fields.  
Moreover, $T$ is assumed to be isotropic in the spatial domain and stationary in the time domain. A spherical random field is said to be isotropic when it is invariant in distribution with
respect to rotations, while stationarity guarantees that the stochastic properties of the process do not change over time. In other words
\begin{equation}\label{eq:isostat}
T\(R\,\cdot, \cdot +\tau\) \overset{d}{=} 	T\(\cdot, \cdot\), 
\end{equation}
where $\tau \in \integers$, $R$ belongs to the special group of rotations $SO\(3\)$, and $\overset{d}{=}$ denotes equality in distribution. 

Under isotropy, for any fixed $t \in \integers$, the following harmonic expansion holds
\begin{equation*}\label{eq:harmexptime}
T\(x,t\) = \sumlm \alm\(t\)\ylm \(x\), \quad \(x,t\)\in \s \times \integers\,.  
\end{equation*}
The set of the harmonic coefficients $\{\alm:\ell \geq 0, m=-\ell ,\dots,\ell\}$ contains all the stochastic information related to $T$ and can be computed explicitly by 
\begin{equation*}
\alm\(t\)=\int_{\s}T\( x,t\) \ylm \( x\) \diff x\,.
\end{equation*}%
Since $T$ is centered, that is, $\Ex\[T\(x,t\)\]=0$ for all $\(x,t\) \in \s \times \integers$, it follows that 
$$
\Ex\[\alm\(t\)\]= 0 \quad \text{for }\ell \in \naturals, \quad m = -\ell,\ldots, \ell, \quad t\in \integers\,.
$$
The covariance function of $T$ will be denoted by $\Gamma:\(\s\times \mathbb{Z}\)\times \(\s\times \mathbb{Z}\) \rightarrow \mathbb{R}$. If the space-time spherical random field is isotropic and stationary, then there exists a function $\Gamma_0: \[-1,1\]\times \mathbb{Z} \rightarrow \mathbb{R}$, so that \eqref{eq:isostat} yields
\begin{equation*}
\Gamma\(x,t,y,s\) = \Gamma_0\(\langle x,y\rangle, t-s \), \quad \(x,t\),\(y,s\) \in \s\times \mathbb{Z}\,.
\end{equation*}
Furthermore, for any $\ell, \ell^\prime \in \naturals$, $m=-\ell,\ldots,\ell$, $m^\prime= -\ell^\prime, \ldots, \ell^\prime$, the elements of the covariance matrix constructed over the harmonic coefficients of $T$ are given by
\begin{equation}\label{eq:cltime}
\Ex\[\alm\(t\) {a}_{\primen}\(s\)\] = \Cl\(t-s\)\delta_{\ell}^{\ell^\prime}\delta_{m}^{m^\prime}, \quad t,s \in \integers\,.
\end{equation}
Observe that, for $t=s$, $\Cl\(0\)$ in \eqref{eq:cltime} corresponds to the so-called angular power spectrum, the spectral decomposition of the covariance function of a purely spatial spherical random field (see, for example, \cite[Remark 5.15, p.124; Remark 6.16, p.147]{MP:11}). Hence, from now on, we will use the notation $\Cl\(0\)=\Cl$.

Covariance functions of isotropic-stationary sphere-cross-time random fields have a spectral decomposition in terms of Legendre polynomials, namely,
\begin{equation*}\label{eq:specdec}
\Gamma\(x,t,y,s\) = \sum_{\ell \geq 0} \Cl \(t-s\) \frac{2\ell+1}{4\pi} P_\ell \(\langle x,y \rangle \), \quad \(x,t\),\(y,s\)\in \s \times \integers\,,
\end{equation*}
as a consequence of Schoenberg Theorem (see also \cite{bergporcu}).
\subsection{Spherical autoregression}
Following \cite{cm19}, we consider isotropic-stationary random fields $\{T(x,t) \, (x,t) \in \s \times \mathbb{Z}\}$ that satisfy the functional autoregressive equation (see also \cite{bosq})
\begin{equation}\label{eq:sphar}
T\(x,t\)= \sum_{j=1}^{p} \(\Phij T\(\cdot,t-j\)\)\(x\) + Z\(x,t\)\,, 
\end{equation}
where:
\begin{itemize}
	\item for $j=0,\ldots,p$, $\Phij:\Ltwo\rightarrow\Ltwo$ is a linear and bounded operator defined by
	\begin{equation*}
	\(\Phij f\) \(x\) = \int_{\s} K_j \(x,y\) f\(y\) \diff y, \quad f \in \Ltwo.
	\end{equation*}
	\item The kernel $K_j:\s \times \s \rightarrow \reals $ is isotropic, that is, there exists a function $k_{j}:\[-1,1\]\rightarrow \reals$ so that $K_j \(x,y\)=k_{j}\(\langle x,y \rangle\)$, for all $x,y \in \s$. 
	Moreover, the following decomposition holds (in the $L^2$-sense and pointwise under Condition \ref{cond:smooth})
	\begin{align}
	\notag K_j\(x,y\)=k_{j}\(\langle x,y \rangle\)=& \sumlm \philj\ylm\(x\)\ylm\(y\)\\
	=& \sum_{\ell\geq 0} \philj \frac{2\ell+1}{4\pi}\legp \(\langle x,y \rangle \)\label{eq:kernel},
	\end{align}
	where the coefficients $\{\philj:\ell \geq 0\}$ are the eigenvalues of the operator $\Phij$ (see \cite{cm19}). 
	\item The field $Z$ is a Gaussian spherical white noise, that is, $\{Z\(\cdot, t\), \, t\in\Z\}$ is a sequence
	of independent and identically distributed Gaussian isotropic spherical random fields, defined so that 
	\begin{enumerate}
		\item for every fixed $t \in \Z$, $Z\(\cdot, t\)$ is a Gaussian, zero-mean isotropic random field, with
		covariance function given by
		\begin{equation*}
		\Gamma_Z\(x, y\) = \sum_{\ell \geq 0} \frac{2\ell+1}{4\pi}\ClZ \legp\(\langle x,y\rangle\), \quad \text{such that } \sum_{\ell \geq 0} \frac{2\ell+1}{4\pi}\ClZ<\infty,
		\end{equation*}
		where $\{\ClZ: \ell \geq 0 \}$ is the angular power spectrum of $Z\(\cdot, t\)$ such that $\ClZ>0$ for any $\ell \geq 0$ (cf. Condition \ref{cond:identify});
		\item  for every $t \neq s$, the random fields $Z\(\cdot, t\)$ and $Z\(\cdot, s\)$ are independent, so that, for any $x,y \in \s$, 
		\begin{equation*}
		\Ex \[Z\(x,t\){Z}\(y,s\)\]=0. 
		\end{equation*}
	\end{enumerate}
\end{itemize}

For any $t \in \integers$, \eqref{eq:kernel} yields 
\begin{equation*}
\(\Phij T\(\cdot,t-j\)\)\(x\)= \sumlm \philj \alm\(t-j\)\ylm\(x\), \quad x \in \s,  
\end{equation*}
namely, the spectral representation of $\(\Phij T\(\cdot,t-j\)\)\(\cdot\)$ is characterized by the following set of harmonic coefficients $\{\philj\alm\(t-j\):\ell \geq 0; m=-\ell, \ldots,\ell\}$. Consequently, from \eqref{eq:randomfield}, it follows that
\begin{equation}\label{eq:arcoeff}
\alm\(t\) = \sumjp \philj \alm \(t-j\) + \almZ \(t\),
\end{equation} 
that is, that is, the coefficients $\{\alm\(t\):t\in\integers\}$ follows an autoregressive model of order $p$, for any $\ell \geq 0$, $m=-\ell,\ldots, \ell$. \\

Before concluding this section, let us fix the so-called truncation frequency $L \in \naturals$. Then, the truncated random field is defined by 
\begin{equation*}
T_{L}\(x,t\)=\sum_{\ell=0}^{L-1} \summ \alm\(t\)\ylm\(x\). 
\end{equation*}
\begin{remark}
	The truncated random field $T_{L}\(x,t\)$ describes exactly a band-limited random field, with band-width $L_0<L$. Moreover, as mentioned for example in \cite{mcewenwiaux}, it provides a very good approximation of a smooth random field, in the sense that its covariance decays fast as $\ell$ grows to infinity. 
\end{remark}

\section{LASSO estimation on the sphere}\label{sec:LASSO}
Here we present LASSO-type estimators for spherical autoregressive kernels under sparsity assumptions. 
More specifically, merging the techniques based on the stability measure presented in \cite{BM:15} with the properties of $\operatorname{SPHAR}(p)$ processes enables the construction of functional estimators for the kernels $\{k_j: j=1,\dots,p\}$.
\subsection{The estimator construction}\label{lasso_est} As introduced in Section \ref{sec:intro}, the spectral decomposition on the sphere allows to reduce the functional penalized minimization problem to the equivalent $\ell_1$-penalized problems in the space of the harmonic coefficients, see \eqref{eq:philasso} below. Let us recall the definition of our LASSO estimator.

\begin{definition}[LASSO estimator]\label{def:lasso_estimator}
	The functional LASSO estimator for the kernel \eqref{eq:kernel} is defined by
	\begin{equation}\label{eq:lasso}
	\kLASSO  =  \underset{\kvec  \in \PpN }{\argmin} \, \frac1N \sumt \norm {  T\(\cdot, t\) - \sumjp \Phij \( T \(\cdot, t-j\)\) }^2_{\Ltwo} +\lambda \sumjp \TRnorm{ \Phij }\, ,
	\end{equation}
	where $\PpN$ is given by \eqref{eq:PpN}.
\end{definition}

More in details, in line with \cite{cm19}, the first part of \eqref{eq:lasso} can be interpreted as a \textit{functional residual sum of squares}, where each term has spectral decomposition
\begin{align*}
T\(\cdot,t\)-\sum_{j=1}^{p} \(\Phij T\(\cdot,t-j\)\)=  \sumlm \( \alm\(t\) - \sumjp \philj \alm \(t-j\)\)\Y.
\end{align*} 
As a consequence,
\begin{align*}
\sumt \norm{T\(\cdot,t\) - \sumjp \Phi_j \(T\(\cdot, t-j\)\)}^2_{\Ltwo}
&= \sumt \int_{\s} \abs{T\(x,t\) - \sumjp \(\Phi_j T\(\cdot, t-j\)\)(x) }^2 \diff x \\ 
&= \sumt \sumlm \abs{ \alm \(t\) -  \sumjp \philj  \alm\(t-j\)}^2. 
\end{align*}
From now on, we fix a truncation level $L= \LN$, which depends on the number of observations $N$. We can also define the \emph{truncated residual sum of squares} as
\begin{align}\label{eq:sphib}
S\(\phib_0,\dots,\phib_{L-1}\) &= \sumt \sumltrunc \summ \abs{ \alm \(t\) -  \sumjp \philj  \alm\(t-j\)}^2, 
\end{align}
which can be rewritten as the sum of the first $\LN$ components of the functional residual sum of squares, namely,
\begin{equation}\nonumber
S\(\phib_0,\dots,\phib_{\LL}\) =  \sum_{\ell=0}^{\LL} S_\ell (\phib_\ell),
\end{equation}
where
\begin{equation}\nonumber
\Sl  = \sumt \summ \abs{ \alm \(t\) -  \sumjp \philj  \alm \(t-j\) }^2 .
\end{equation}

Using \eqref{eq:sphib}, the functional minimization problem can be reformulated as
\begin{align*}
\kLASSO & = \sumltrunc \phiLASSO
\frac{2\ell+1}{4\pi} \LegP \,,
\end{align*}
with
\begin{align}
\phiLASSO & = \underset{\phib_\ell \in \reals^p} {\argmin}  \, \frac1N \Sl +\lambda(2\ell+1) \norm{\phib_\ell }_1 \notag\\
&=\underset{\phib_\ell \in \reals^p} {\argmin}  \frac{1}{N(2\ell+1)}\Sl+ \lambda\norm{\phib_\ell }_1. \label{eq:philasso}
\end{align}
\begin{remark}[\bf Important remark on the high dimensional nature of the procedure]\label{rmk:highdim}
The formula \eqref{eq:philasso} could lead to a misleading interpretation of the minimization problem. Although from a computational point of view the procedure is \emph{separable}, i.e. it can be solved separately for each $\ell$, the problem still has a high-dimensional nature. First of all, the penalty parameter $\lambda$ does not depend on $\ell$. Moreover, our penalization problem applies also to $\operatorname{SPHAR}(1)$ processes, which means that it can be used as a penalization procedure only on the first $L_N$ multipoles and not on the lags. This fact is made very clear by Figures \ref{fig:1}-\ref{fig:2} where it shows up that, also with very small lag dimension ($p=2$), the LASSO has the effect to regularize the kernel estimates. To better understand our point, take $p=1$ and
$$
\widehat{\phi}^{\operatorname{lasso}}_\ell=\underset{\phi_{\ell} \in \reals} {\argmin}  \frac{1}{N(2\ell+1)} \sum_{t=2}^n \summ \abs{ \alm \(t\) -  \phi_{\ell}  \alm \(t-1\) }^2+ \lambda \abs{\phi_{\ell}}\,;
$$
then the solution can be computed as follows
$$
\widehat{\phi}^{\operatorname{lasso}}_\ell=
\begin{cases}
{\displaystyle \widehat{\phi}^{\operatorname{ols}}_\ell+\frac{\lambda}{2 \widehat{C}_\ell}} & {\displaystyle \widehat{\phi}^{\operatorname{ols}}_\ell\widehat{C}_\ell < - \frac{\lambda}2}\\
 0 & {\displaystyle\abs{\widehat{\phi}^{\operatorname{ols}}_\ell\widehat{C}_\ell} \le \frac{\lambda}2}\\
{\displaystyle\widehat{\phi}^{\operatorname{ols}}_\ell-\frac{\lambda}{2 \widehat{C}_\ell} }& {\displaystyle \widehat{\phi}^{\operatorname{ols}}_\ell\widehat{C}_\ell > \frac{\lambda}2}
\end{cases}\,.
$$
Note that the procedure shrink towards zero not only if the \emph{true} value $\phi_\ell$ is close to zero, but also if the variance $C_\ell$ is very small, which clearly happens at high frequencies, because of the summability of the $C_\ell$'s. On the contrary, for classical LASSO problems, one assumes that the regressors are standardized, which surely cannot happen in our case at fixed $\ell$. Indeed, since here we start our penalization procedure from equation \eqref{eq:lasso},
we can at most rescale by the total variance of the spherical process.
As a consequence, it is now clear that the large $p$, small $N$ ($p \gg N$) scenario, which is an important subject of many of nowadays articles, is not the focus here. 
\end{remark}

\begin{remark}
	Note that the penalization procedure \eqref{eq:lasso} preserves isotropy. This is a consequence of the fact that we are considering a \emph{block-sparsity} model; indeed, given the structure of the predictor $\Phij T(\cdot,t-j)$, 
	where all the $a_{\ell\cdot}$ share the same $\phi_{\ell;j}$, 
	the procedure will automatically select only the relevant multipoles $\ell$.  As a result, a multipole is either removed entirely or not removed at all from the $j$-th component of $\kLASSO$. 
	The reader is referred for further discussions to \cite{CM:15b}, where it is shown that not all the $\ell_1$-penalized problems have solutions which are isotropic, and to \cite{GSWW:18} for sparsity enforcing procedures for isotropic spherical random fields.
\end{remark}

\subsection{Matrix notation}
An alternative form for the minimization problem given by \eqref{eq:lasso} can be introduced as follows. First, we define the following $N\(2\ell+1\)$-dimensional vectors,
\begin{align*}
& \YlN = \( a_{\ell,-	\ell}\(n\),\ldots, a_{\ell, -\ell}\(p+1\),
\ldots, a_{\ell, \ell}\(p+1\)\)', \\
& \YlNh =  \( a_{\ell,-	\ell}\(n-h\),\ldots, a_{\ell, -\ell}\(p+1-h\),
\ldots, a_{\ell, \ell}\(p+1-h\)\)', \quad h=1,\ldots,p,
\\
&\ElN =  \( a_{\ell,-	\ell;Z}\(n\), \ldots, a_{\ell, -\ell; Z}\(p+1\),  \ldots a_{\ell, \ell;Z}(p+1)\),
\end{align*}
where we recall that $\Tup=n-p$. We can thus define the $\(N\(2\ell+1\)\times p\)$ matrix
\begin{equation}\label{eq:XlN}
\XlN = \{ \YlN\(1\):\dots: \YlN\(p\)\}, 
\end{equation}
so that the LASSO problem \eqref{eq:lasso} reduces to
\begin{align*}
\kLASSO & = \sumltrunc \phiLASSO
\frac{2\ell+1}{4\pi} \LegP \,,
\end{align*}
with
\begin{equation}\label{eq:lasso2}
 \phiLASSO =  \underset{\phib_\ell \in \reals^p}{\argmin} \, \frac{1}{N(2\ell+1)}\norm{\YlN - \XlN \phib_\ell}^2_2 +\lambda \norm{\phib_\ell}_1.
\end{equation}
Fixed $\ell = 0, \ldots, \LL$, we define the covariance matrix $\GammalN$, that is, the $p\times p$ matrix with generic  $ij$-th element $C_\ell\(i-j\)$. We can use \eqref{eq:XlN} to define its unbiased estimator 
\begin{equation*}
\GammalNest=\frac{\XlN'\XlN}{N\(2\ell+1\)}. 
\end{equation*}
Let us now consider the product $\XlN'\ElN/N(2\ell+1)$. Observe that $\ElN$ is related to the error random field $Z$, so that we can read this random object as the process obtained from the product of the stochastic data matrix $\XlN$ and the noise vector. Indeed, it represents the so-called \emph{empirical process} (see \cite{bvdg} and the references therein), associated with the multipole $\ell$. Furthermore, observe that the $\ell$-th empirical process corresponds to the following sum
\begin{equation}\label{eq:empproc}
\frac{\XlN'\ElN}{N\(2\ell+1\)} =\frac{\XlN' \YlN}{N\(2\ell+1\)}-\frac{\XlN'\XlN}{N\(2\ell+1\)}\phib_\ell= \gammalN-\widehat\Gamma_{\ell,N}\phib_\ell\,,
\end{equation}
where 
\begin{equation*}
\gammalN = \frac{\XlN' \YlN}{N\(2\ell+1\)}\,.
\end{equation*}
Establishing an upper bound for the empirical processes will be crucial for the proof of the consistency property for the LASSO estimators.

\section{Main results}\label{sec:MR}
In this section, we present the main results of this paper, which consist of properties for the LASSO-type estimator of $\ka$ given in \eqref{eq:lasso}. 
First of all, in Section \ref{sub:stameasure} we introduce the concept of stability measure, a powerful tool firstly proposed in the LASSO framework in \cite{BM:15}, to obtain some bounds on the concentration of the sample covariances and the empirical processes  around their expected values.
Then, in Section \ref{sub:bounds}, we will follow the standard scheme of LASSO-techniques, see \cite{tibshibook1,bvdg}, to establish a \emph{basic inequality}, a \emph{deviation condition} and a \emph{compatibility condition}. Finally, in Section \ref{sub:oracle} we present our main theorem, that is, the so-called \emph{oracle inequalities} for $\kLASSO$.

\subsection{Stability measure on the sphere and deviation bounds} \label{sub:stameasure}
Here, we discuss the stability measure for $\operatorname{SPHAR}(p)$ random fields. Intuitively, a stability measure quantifies the dependencies between the variables of the process and, hence, how stable the autocovariance matrix is. The more intricate the dependencies between the variables are, the less stable should the process result. Several proposals aiming to represent and measure the stability
of a given process have been suggested in the literature over the years, mostly involving set of mixing conditions, in order to establish for how long in time the dependence between the components is effective (see Appendix E in the supplementary file of \cite{BM:15}). The stability measure considered here is in line with the one defined by \cite{BM:15}.\\

Recall that $\{\alm\(t\),  t \in \integers\}$ can be read as a real-valued autoregressive process of order $p$ (see \eqref{eq:arcoeff}). 
Under standard stationarity assumptions (see \cite[page 123]{brockwelldavis}), we can define its spectral density as
\begin{align*}
f_\ell(\nu) =\frac{1}{2\pi} \sum_{\tau = -\infty}^{\infty} \Cl\(\tau\) e^{-i\nu \tau}= \frac{1}{2\pi}\frac{C_{\ell;Z}}{\abs{\phi_\ell(e^{-i\nu})}^2}\,, \qquad \nu \in \[-\pi , \pi\],
\end{align*}
which is bounded and continuous (see also \cite{cm19}). Upper and lower extrema of the spectral density over the unit circle are hence given by 
\begin{align*}
\Mf & =  \max_{\nu \in [-\pi,\pi]} f_\ell(\nu),\\
\mf & =  \min_{\nu \in [-\pi,\pi]} f_\ell(\nu).
\end{align*}
In what follows, we adopt $\Mf$ as a measure of the stability of the process $\{\alm\(t\), t \in \integers\}$. Generalizing \cite{BM:15}, we can consider this as a \textit{band limited} stability measure, in the sense that it refers only to the subprocesses belonging to the multipole $\ell$. A \textit{global} stability measure can be obtained by considering jointly all the multipoles $\ell \geq 0$ via the following definition
\begin{equation*}
\mathcal{M} = \mathcal{M}\(T\) = \max_{\ell\geq 0 } \Mf;
\end{equation*}
whereas we can refer to $\mathcal{M}_N=\max_{\ell<\LN } \Mf$ as the \emph{observed} stability measure. Let us now define the following $p$-dimensional process $$
\tilde{a}_{\ell,m}\(t\) =\(\alm\(t\),\ldots,\alm\(t-p+1\)\)',  
$$
with spectral density and corresponding stability measure given by
$$
\tilde{f}_\ell \(\nu\) = \frac{1}{2\pi}\sum_{\tau=-\infty}^{\infty} \Gamma_\ell \(\tau\)e^{-i \tau \nu} \quad \text{and} \quad M\(\tilde{f}_\ell\)=  \max_{\nu \in [-\pi,\pi]} \Lambda_{\max} (\tilde f_\ell(\nu)), 
$$
where 
$$
\Gamma_\ell \(\tau\) = \Ex\[\tilde{a}_{\ell,m}\(t+\tau\){\tilde{a}}^\prime_{\ell,m}\(t\)\]
$$
and $\GammalN = \GammalN\(0\)$. 
We can therefore construct $\rr$-dimensional subprocesses of $\{\tilde{a}_{\ell,m}\(t\):t\in\integers\}$ as follows. We fix a $\rr$-dimensional index $J=\(j_1,\ldots,j_\rr\)$, so that $J\in\{1,\ldots,p\}^\rr$, and $j_1<\ldots<j_\rr$. Then, we define
$$
\tilde{a}^J_{\ell,m}\(t\) = \(\(\tilde{a}_{\ell,m}\(t\)\)_{j_1},\ldots,\(\tilde{a}_{\ell,m}\(t\)\)_{j_\rr}\)',
$$
where $\(\tilde{a}_{\ell,m}\(t\)\)_{i}$ is the $i$-th component of $\{\tilde{a}_{\ell,m}\(t\):t\in \integers\}$. This subprocess has spectral density $\tilde{f}^J_\ell\(\nu\)$.
We can finally introduce  
\begin{align*}
& 
\mathcal{M}\(\tilde{f}_{\ell},\rr \)= \max_{J \subset \{1, \ldots, p\}, \abs{J}   \le
	\rr}
\mathcal{M}\(\tilde{f}^J_{\ell} \),\\
&\widetilde{\mathcal{M}}(\rr)=\max_{\ell\ge0}\mathcal{M}\(\tilde{f}_{\ell},\rr \),\\
&\widetilde{\mathcal{M}}_N(\rr)=\max_{\ell<\LN}\mathcal{M}\(\tilde{f}_{\ell},\rr \),
\end{align*}
respectively, the band-limited, the global and the observed stability measures of the subprocess $\{\tilde{a}^\rr_{\ell,m}\(t\):t\in \integers \}$.
Notice that $\mathcal{M}\(\tilde{f}_\ell\)=\mathcal{M}\(\tilde{f}_\ell,p\)$, while, for the sake of completeness, we define $\mathcal{M}\(\tilde{f}_\ell,\rr\)=\mathcal{M}(\tilde{f}_\ell)$, for all $\rr>p$. Moreover, it can be shown that
$$
\mathcal{M}\(\tilde{f}_\ell,1\) \le \mathcal{M}\(\tilde{f}_\ell,2\)\le\cdots\le
\mathcal{M}\(\tilde{f}_\ell,p\) = \mathcal{M}\(\tilde{f}_\ell\).
$$
The following quantities are also well-defined
\begin{equation}\label{eq:mumu}
\mumin= \min_{z\in\C:\abs{z}=1}|\phi_\ell(z)|^2, \quad \mumax= \max_{z\in\C:\abs{z}=1}|\phi_\ell(z)|^2 , \quad \text{for} \quad  \ell\ge0\,.
\end{equation}
\begin{remark}\label{rmk:muminmax}
	Note that, in line with Remark \ref{rmk:infinf}, there exists a positive constant $c$ such that                        	
	$$
	\max_{\ell\ge 0} \mu_{\max; \ell}= \max_{\ell\ge 0} \max_{z\in\C:\abs{z}=1} \left | 1- \sum_{j=1}^p \phi_{\ell;j} z^j \right |^2 \le \(1+\sum_{j=1}^p \max_{\ell \ge 0} \abs{\phi_{\ell;j}}\)^2\le c\,,
	$$
	and
$$
	\min_{\ell \ge 0} \mu_{\min; \ell}=\min_{\ell \ge 0} \min_{z\in\C:\abs{z}=1} \left | 1- \sum_{j=1}^p \phi_{\ell;j} z^j \right |^2 = \min_{\ell \ge 0} \min_{z\in\C:\abs{z}=1} \prod_{j=1}^{d_\ell} |1-\xi^{-1}_{\ell; j} z|^2  \ge ( 1-\xi^{-1}_{\ast} )^{2p} >0\,,
$$
recalling that $\xi_{\ast} = \min_{\ell \ge 0} \min_{j = 1,\dots,d_\ell} | \xi_{\ell; j} |$. These bounds represent the moral counterpart of Proposition 2.2 in \cite{BM:15} and show how the global stability $\mathcal{M}$ behaves
for SPHAR models. Indeed, for a $\operatorname{SPHAR}(1)$ process, the operator $\Phi_1$ plays the same role of the matrix $A_1$ in \cite{BM:15} and $\xi^{-1}_{\ast} = \max_{\ell \ge 0}  |\phi_\ell | = \| \Phi_1\|_{\operatorname{op}}$.  As a consequence $\mathcal{M}$ is
bounded as long as the operator norm of $\Phi_1$ (the counterpart of the spectral radius of $A_1$) is
bounded away from 1.
\end{remark}

We apply now the idea of stability measure to establish some relevant deviation bounds on the covariance estimators and the empirical processes, which will be pivotal to analyse our regression problem.
Note that, $\{\alm\(t\):  t \in \integers\}$, $m=-\ell,\dots,\ell$, can be seen as a tool to provide an alternative notation for the empirical process \eqref{eq:empproc}. Indeed the $h$-th component of $\XlN'\ElN/N(2\ell+1)$ is given by
\begin{align}
\frac{\YlNh'\ElN} {N\(2\ell+1\)} &= \frac{1}{N\(2\ell+1\)}  \summ \sumt \alm\(t-h\) \almZ\(t\)\label{eq:sottoprocessi}.
\end{align}

\begin{proposition}[Deviation bounds]\label{prop:DevBound}
	There exists a constant $c>0$ such that for any $\rr$-sparse vectors $u,v \in \R^p$ with $\norm{u},\norm{v}\le1$, $\rr\ge 1$ and any $\eta\ge0$, it holds that
	\begin{align}\label{eq:vvDB}
	&\P\(\abs{v'\(\GammalNest - \GammalN\)v}> 2\pi \MM \( \widetilde f_\ell, \rr \) \eta \) \le 2e^{-c\,N\(2\ell+1\) \min\{ \eta^2, \eta \}},\\
	\label{eq:uvDB}
	&\P\(\abs{u'\(\GammalNest - \GammalN \) v} > 6 \pi \MM \(\widetilde f_\ell,2\rr\) \eta \) \le 6e^{-cN\(2\ell+1\) \min\{ \eta^2, \eta\}}.
	\end{align} 
	In particular, for any $i,j \in \{1,\dots,p\}$, it holds that
	\begin{equation}\label{eq:ijDB}
	\P\(\abs{\(\GammalNest - \GammalN \)_{ij}}>6\pi \mathcal M(\widetilde f_\ell,2)\eta\)\le 6e^{-c\,N(2\ell+1) \min\{ \eta^2, \eta\}}\,.
	\end{equation}
	Moreover, for all $1\le h \le p$, it holds that
	\begin{equation}\label{eq:yeDB}
	\P\(\abs{\frac{\YlN'\(h\) \ElN}{N\(2\ell+1\)}} > 2 \pi\, \ClZ \( 1+\frac{1+\mumax}{\mumin}\)\eta\)\leq 6 e^{-c N\(2\ell+1\) \min \{\eta^2, \eta \}},
	\end{equation}
	where $\mumax$ and $\mumin$ are defined by \eqref{eq:mumu}. 
\end{proposition}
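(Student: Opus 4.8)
The plan is to adapt the Gaussian concentration machinery of \cite{BM:15} to the spherical setting, exploiting that, for each fixed multipole $\ell$, the coefficients $\{\alm(t):t\in\integers\}$ and their $p$-dimensional companion vectors $\tilde a_{\ell,m}(t)$ form jointly Gaussian stationary processes whose second-order structure is encoded by the matrix spectral density $\tilde f_\ell$. The only probabilistic input I will need is the Hanson--Wright inequality for quadratic forms of Gaussian vectors, combined with the classical fact that the operator norm of a (block-)Toeplitz covariance is at most $2\pi$ times the supremum over frequencies of the largest eigenvalue of the associated spectral density. First I would rewrite the bilinear quantity in \eqref{eq:vvDB} through the underlying coefficients: since $\GammalNest=\XlN'\XlN/(N(2\ell+1))$,
\begin{equation*}
v'\GammalNest v=\frac{1}{N(2\ell+1)}\norm{\XlN v}^2=\frac{1}{N(2\ell+1)}\summ\sumt \(v'\tilde a_{\ell,m}(t)\)^2,
\end{equation*}
a normalized sum of squared Gaussians with expectation $v'\GammalN v$. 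Stacking the scalar projections $v'\tilde a_{\ell,m}(t)$ into a single centered Gaussian vector $W_v\in\R^{N(2\ell+1)}$ reduces \eqref{eq:vvDB} to the concentration of $\norm{W_v}^2$ about its mean.

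Next, the decisive analytic step is to bound the operator and Frobenius norms of the covariance of $W_v$. By the orthogonality \eqref{eq:cltime}, the $\{\alm\}$ are uncorrelated across $m$, so this covariance is block diagonal in $m$; each block is the $N\times N$ Toeplitz covariance of the scalar stationary series $t\mapsto v'\tilde a_{\ell,m}(t)$, whose spectral density is the quadratic form $v'\tilde f_\ell(\nu)v$. Its operator norm is therefore at most $2\pi\sup_\nu v'\tilde f_\ell(\nu)v\le 2\pi\,\mathcal{M}(\tilde f_\ell,\rr)$ whenever $v$ is $\rr$-sparse with $\norm{v}\le1$ --- which is exactly how the $\rr$-sparse stability measure is defined --- while block diagonality makes the Frobenius norm carry the full dimension $N(2\ell+1)$. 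Inserting these into the Hanson--Wright bound at deviation level $t=2\pi\,\mathcal{M}(\tilde f_\ell,\rr)\,N(2\ell+1)\,\eta$ produces the $\min\{\eta^2,\eta\}$ tail of \eqref{eq:vvDB}.

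The remaining bounds follow by reduction to the first. For \eqref{eq:uvDB} I would polarize, $u'(\cdot)v=\tfrac14[(u+v)'(\cdot)(u+v)-(u-v)'(\cdot)(u-v)]$; as $u\pm v$ is $2\rr$-sparse, two applications of \eqref{eq:vvDB} and a union bound yield the factor $2\rr$ inside the stability measure and the constants $6\pi$ and $6$, and \eqref{eq:ijDB} is merely the choice $u=e_i$, $v=e_j$. For the empirical process \eqref{eq:yeDB}, the quantity $\YlN'\(h\)\ElN/(N(2\ell+1))=\frac{1}{N(2\ell+1)}\summ\sumt\alm(t-h)\almZ(t)$ is a bilinear Gaussian form, which I again turn into a difference of quadratic forms by polarization, now applied to the bivariate stationary process $(\alm(\cdot-h),\almZ(\cdot))$. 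Its spectral density matrix carries the white-noise level $\ClZ/(2\pi)$ on the innovation coordinate, the autoregressive level $\ClZ/(2\pi|\phi_\ell(e^{-i\nu})|^2)$ on the other, and a cross term governed by the transfer function $1/\phi_\ell(e^{-i\nu})$; bounding its largest eigenvalue via Remark \ref{rmk:muminmax} produces precisely the factor $\ClZ\(1+(1+\mumax)/\mumin\)$.

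The hard part will be making the operator-norm bound on the block-Toeplitz covariances both correct and sharp, in particular justifying the constant $2\pi$ and, crucially, the passage from the full spectral density to its $\rr$-sparse restriction $\mathcal{M}(\tilde f_\ell,\rr)$: it is this sparse restriction that later allows the bounds to be applied along the relevant directions and that ultimately drives the compatibility condition. A secondary obstacle is the empirical-process term, where $\alm$ and its own innovations $\almZ$ are coupled through the autoregressive filter, so that the cross-spectral density is non-trivial even though the zero-lag cross-covariance vanishes by causality; extracting the explicit constant then requires controlling this cross-spectrum through the root factorization of $\phi_\ell$ in Remark \ref{rmk:muminmax}, rather than through the abstract stability measure used for the covariance bounds.
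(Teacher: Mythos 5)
Your proposal is correct and is essentially the paper's own argument: the paper likewise stacks $X_{\ell,N}v$ into a single Gaussian vector, observes that its covariance is $B_{\ell,J}\otimes I_{2\ell+1}$ (block-diagonal across $m$ by isotropy/uncorrelatedness of the $a_{\ell,m}$), bounds the operator norm by $2\pi\mathcal{M}(\widetilde f_\ell,\rr)$ via the Toeplitz/spectral-density estimate, invokes the Gaussian quadratic-form concentration of \cite{BM:15} (your explicit Hanson--Wright step), and then reduces the bilinear terms \eqref{eq:uvDB} and \eqref{eq:yeDB} to quadratic forms, bounding the AR and cross-spectra through $\mumin,\mumax$ exactly as you describe. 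The only immaterial deviation is that you polarize with two terms, $u'Av=\tfrac14[(u+v)'A(u+v)-(u-v)'A(u-v)]$ and $4xy=(x+y)^2-(x-y)^2$, where the paper uses the three-term identities $2u'Av=(u+v)'A(u+v)-u'Au-v'Av$ and $2xy=(x+y)^2-x^2-y^2$; this in fact yields slightly sharper constants, which still imply the stated bounds.
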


\begin{remark}
Note that the deviation bounds can be also stated in terms of the global or observed stability measure $\widetilde{\mathcal{M}}(\rr)$ or $\widetilde{\mathcal{M}}_N(\rr)$. 
\end{remark}

The analogous result presented in \cite{BM:15} is very general, since it deals with stationary Gaussian random processes on $\reals^d$, while here we focus on deviation bounds for our specific empirical covariance matrices and empirical processes. 
The main technical difference between our results and the ones in \cite{BM:15} is that, in our framework, we use observations from a group of $2\ell+1$ stationary processes, namely, $\{\alm(t): t\in\Z\}$, $m=-\ell, \ldots, \ell$, to estimate the same covariance matrix $\GammalN$, exploiting the isotropy of the random field. 

Similarly to the work \cite{BM:15}, \eqref{eq:vvDB} and \eqref{eq:yeDB} quantify how the underlying estimators concentrate around their expected values. 
In particular, \eqref{eq:vvDB} will be used to verify the compatibility condition (see Proposition \ref{DevCond}), while \eqref{eq:yeDB} will be used to prove the deviation condition (see Proposition \ref{RE_thm}). In the $\text{i.i.d.}$ case, bounds on the empirical process can be easily established, since the data matrix is deterministic and the randomness comes only from the noise vector. In our case, similarly to \cite{BM:15}, the $\ell$-th empirical process is the product of a dependent noise vector and a stochastic data matrix. Therefore, proving consistency requires a bound on both these two random objects.


\subsection{Bounds for LASSO techniques}\label{sub:bounds}
We are now in the position to present the classical path of LASSO in our setting. \\
The very first result concerns the so-called \emph{basic inequality}, an elementary yet essential result, which does not require any condition or assumption, except the existence of a linear underlying model, and it is simply based on the definition of the LASSO estimator. 
\begin{proposition}[Basic inequality]\label{prop:basic} Consider the estimation problem \eqref{eq:lasso}. For any $\ell=0,\dots,\LL$, set $v_\ell= \phiLASSO-\phib_\ell$. 
	Then, the following basic inequality holds
	\begin{equation}\label{eq:BI}
	v_\ell' \GammalNest v_\ell \leq \frac{2 v_\ell'\XlN'\ElN}{N(2\ell+1)} + \lambda \, \big[\norm{\phib_\ell}_1-\norm{\phib_\ell+v_\ell}_1\big]\,.
	\end{equation}
\end{proposition}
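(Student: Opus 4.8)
The plan is to derive the basic inequality purely from the optimality of the LASSO solution, following the standard LASSO template (see \cite{tibshibook1,bvdg}) adapted to the $\ell$-th subproblem \eqref{eq:philasso}. First I would fix $\ell \in \{0,\dots,\LL\}$ and exploit the fact, established in Section \ref{lasso_est}, that the functional minimization \eqref{eq:lasso} decouples across multipoles, so that $\phiLASSO$ minimizes
\begin{equation*}
\phib \mapsto \frac{1}{N(2\ell+1)}\norm{\YlN - \XlN \phib}^2_2 + \frac{\lambda}{N}\norm{\phib}_1
\end{equation*}
over $\phib \in \reals^p$. The key consequence is that evaluating this objective at the minimizer $\phiLASSO$ gives a value no larger than at the true parameter $\phib_\ell$, i.e.
\begin{equation*}
\frac{1}{N(2\ell+1)}\norm{\YlN - \XlN \phiLASSO}^2_2 + \frac{\lambda}{N}\norm{\phiLASSO}_1 \le \frac{1}{N(2\ell+1)}\norm{\YlN - \XlN \phib_\ell}^2_2 + \frac{\lambda}{N}\norm{\phib_\ell}_1.
\end{equation*}

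Next I would expand the quadratic terms by substituting $\YlN = \XlN\phib_\ell + \ElN$, which holds because $\ElN = \YlN - \XlN\phib_\ell$ is exactly the empirical-process/noise vector identified in \eqref{eq:empproc}. Writing $v_\ell = \phiLASSO - \phib_\ell$, the residual at the minimizer becomes $\YlN - \XlN\phiLASSO = \ElN - \XlN v_\ell$, so that
\begin{equation*}
\norm{\YlN - \XlN \phiLASSO}^2_2 = \norm{\ElN}^2_2 - 2\,v_\ell'\XlN'\ElN + \norm{\XlN v_\ell}^2_2,
\end{equation*}
while $\norm{\YlN - \XlN\phib_\ell}^2_2 = \norm{\ElN}^2_2$. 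Substituting both into the optimality inequality, the common $\norm{\ElN}^2_2$ cancels. After multiplying through by $N(2\ell+1)$ on the quadratic part and rearranging, I would move the penalty difference to the right-hand side to obtain
\begin{equation*}
\frac{1}{N(2\ell+1)}\norm{\XlN v_\ell}^2_2 \le \frac{2\,v_\ell'\XlN'\ElN}{N(2\ell+1)} + \frac{\lambda}{N}\big[\norm{\phib_\ell}_1 - \norm{\phiLASSO}_1\big].
\end{equation*}
Finally I would identify $\frac{1}{N(2\ell+1)}\norm{\XlN v_\ell}^2_2 = v_\ell' \GammalNest v_\ell$ using the definition $\GammalNest = \XlN'\XlN/(N(2\ell+1))$, and rewrite $\norm{\phiLASSO}_1 = \norm{\phib_\ell + v_\ell}_1$, yielding exactly \eqref{eq:BI}.

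Since no probabilistic or structural hypotheses are invoked, there is no genuine obstacle here: the result is an algebraic identity plus the defining inequality of the argmin, and the only point requiring minor care is the bookkeeping of the factor $N(2\ell+1)$ when normalizing the squared-loss term versus the penalty term (the loss carries the $1/(N(2\ell+1))$ normalization while the penalty carries $1/N$). I would double-check that the cross term $-2\,v_\ell'\XlN'\ElN$ retains its sign correctly through the cancellation so that it appears with a $+2$ on the right-hand side, which matches the stated \eqref{eq:BI}. The passage to the functional $L^2$-norm over $\PpN$ is not needed for this proposition, as the decoupling across $\ell$ already reduces everything to the single finite-dimensional ridge-of-lasso subproblem.
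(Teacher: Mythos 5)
Your proposal is correct and follows essentially the same route as the paper's proof: both start from the optimality of $\phiLASSO$ for the decoupled $\ell$-th subproblem \eqref{eq:lasso2}, substitute $\YlN = \XlN\phib_\ell + \ElN$, expand the square so that $\norm{\ElN}_2^2$ cancels, and identify $v_\ell'\XlN'\XlN v_\ell/(N(2\ell+1)) = v_\ell'\GammalNest v_\ell$. The bookkeeping of the normalizations and the sign of the cross term are handled exactly as in the paper.
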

This simple result implies that the prediction error $v_\ell' \GammalNest v_\ell$ is bounded by the sum of two factors. The first one is random and it depends on the empirical process $\XlN'\ElN/N\(2\ell+1\)$. The second one is deterministic and its value depends on the penalty parameter $\lambda$, on $\Tup$ and on the chosen linear model itself. \\

The second step consists in defining an event $\S_N$ such that the fluctuations of the random factors 
$$
\frac{2 v_\ell'\XlN'\ElN}{N(2\ell+1)}\,, \quad \ell=0,\dots, \LL\,,
$$ 
when conditioned to $\S_N$, are all controlled by the same deterministic quantity. Moreover, we need to prove that this event has a high probability, implying that a bound on the prediction errors can be obtained in most cases. 
The event $\S_N$ is defined as follows. 
\begin{definition}
	In the setting previously described, let 
	\begin{equation*}
	\S_N=\bigcap_{\ell=0}^{L_N-1} \{\norm{\widehat\gamma_{\ell,N}-\widehat\Gamma_{\ell,N}\phib_\ell}_{\infty} \leq \mathcal F_N\sqrt{\frac{\log pL_N}{N}}\},
	\end{equation*}
	where $\mathcal{F}_N$ is a deterministic function depending only on the parameters $\(\phib_0,\ldots,\phib_{\LL}\)$ and noise variances $\(C_{0;Z},\ldots,C_{\LL;Z}\)$. The deviation condition is said to hold if the event $\S_N$ happens.
\end{definition}
The following theorem shows that, for an appropriate choice of $\mathcal{F}_N$ and $\LN$, the event $\S_N$ has high probability to occur.
\begin{proposition}[Deviation condition]\label{DevCond}
	Consider the regression problem \eqref{eq:arp} and the proposed estimator $\kLASSO$ described in \eqref{eq:lasso}. Assume that Conditions \ref{cond:identify} and \ref{cond:stazionarity} hold. There exist some constants $c_0,c_1,c_2>0$ such that, if we define 
	$$
	\mathcal F\(\phib_\ell,C_{\ell;Z}\)=c_0\[C_{\ell;Z} \(1+\frac{1+\mu_{\max; \ell}}{\mu_{\min; \ell}} \)\], \qquad \mathcal F_N= \max_{\ell< L_N} \mathcal F\(\phib_\ell,C_{\ell;Z}\), 
	$$
	and if $N \succeq \log p L_N$, then
	$$
	\P\(\bigcap_{\ell=0}^{L_N-1} \norm{\widehat\gamma_{\ell,N}-\widehat\Gamma_{\ell,N}\phib_\ell}_{\infty} \leq \mathcal F_N \sqrt{\frac{\log p L_N}{N}}\)\geq 1-c_1e^{-c_2\log \(p\LN\)}\,.
	$$
\end{proposition}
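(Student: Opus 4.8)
The plan is to control the complement $\S_N^c$ by a union bound, reducing everything to the scalar deviation estimate \eqref{eq:yeDB}. First I would recall from \eqref{eq:empproc} that, for each fixed $\ell$, the residual vector $\gammalN-\GammalNest\phib_\ell$ coincides exactly with the $\ell$-th empirical process $\XlN'\ElN/(N(2\ell+1))$, whose $h$-th coordinate is the scalar quantity displayed in \eqref{eq:sottoprocessi}. Hence the event $\{\norm{\gammalN-\GammalNest\phib_\ell}_\infty > \mathcal{F}_N\sqrt{\log pL_N/N}\}$ is the union over $h=1,\dots,p$ of the scalar events $\{\abs{\YlNh'\ElN}/(N(2\ell+1)) > \mathcal{F}_N\sqrt{\log pL_N/N}\}$, and $\S_N^c$ is the union of these over $\ell=0,\dots,L_N-1$ as well; in total there are exactly $pL_N$ scalar events to handle.

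For a single coordinate I would invoke \eqref{eq:yeDB}. Fixing $c_0=2\pi K$ in the definition of $\mathcal{F}(\phib_\ell,C_{\ell;Z})$, for a constant $K\ge1$ to be chosen, and setting $\eta=K\sqrt{\log pL_N/N}$, the threshold $\mathcal{F}(\phib_\ell,C_{\ell;Z})\sqrt{\log pL_N/N}$ matches exactly the level $2\pi C_{\ell;Z}(1+(1+\mumax)/\mumin)\eta$ appearing in \eqref{eq:yeDB}. Since $\mathcal{F}_N=\max_{\ell<L_N}\mathcal{F}(\phib_\ell,C_{\ell;Z})\ge\mathcal{F}(\phib_\ell,C_{\ell;Z})$, we obtain the inclusion $\{\abs{\YlNh'\ElN}/(N(2\ell+1))>\mathcal{F}_N\sqrt{\log pL_N/N}\}\subseteq\{\abs{\YlNh'\ElN}/(N(2\ell+1))>\mathcal{F}(\phib_\ell,C_{\ell;Z})\eta\}$, so each scalar event has probability at most $6e^{-cN(2\ell+1)\min\{\eta^2,\eta\}}$. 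Remark \ref{rmk:muminmax} together with \eqref{eq:mumu} guarantees $\mumin>0$ uniformly in $\ell$, so $\mathcal{F}_N$ is finite and this rescaling is legitimate.

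The decisive step is absorbing the cardinality $pL_N$ of the union into the exponential. Under the sample-size hypothesis $N\succeq\log pL_N$ (with the absolute constant taken large enough relative to $K$) one has $\eta\le1$, whence $\min\{\eta^2,\eta\}=\eta^2=K^2\log pL_N/N$ and the per-coordinate bound becomes $6e^{-cK^2(2\ell+1)\log pL_N}\le 6(pL_N)^{-cK^2}$, using $2\ell+1\ge1$. Summing over the $pL_N$ events gives $\P(\S_N^c)\le 6(pL_N)^{1-cK^2}$, and choosing $K$ so that $cK^2\ge2$ yields $\P(\S_N^c)\le 6e^{-(cK^2-1)\log pL_N}$, i.e.\ the claim with $c_1=6$ and $c_2=cK^2-1>0$. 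The main obstacle is precisely this bookkeeping: one must amplify the fixed decay constant $c$ of \eqref{eq:yeDB} by enlarging the deviation level $\eta$ (through $K$) while keeping the target threshold $\mathcal{F}_N\sqrt{\log pL_N/N}$ intact, and one must check that the sample-size condition is exactly what forces the sub-Gaussian regime $\min\{\eta^2,\eta\}=\eta^2$ in place of the weaker sub-exponential tail.
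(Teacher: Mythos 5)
Your proposal is correct and follows essentially the same route as the paper's own proof: a union bound over the $pL_N$ coordinate events, the scalar deviation bound \eqref{eq:yeDB} applied at level $\eta \propto \sqrt{\log (pL_N)/N}$, the sample-size condition $N \succeq \log (pL_N)$ used to land in the $\min\{\eta^2,\eta\}=\eta^2$ regime (together with $2\ell+1\ge 1$), and a free constant ($K$ in your notation, $c_0$ in the paper's) amplified so that the exponential decay beats the multiplicity $pL_N$, yielding $c_1=6$ and $c_2>0$. The only cosmetic difference is that you carry the factor $2\pi$ explicitly via $c_0=2\pi K$, while the paper absorbs it into its constants.
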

\begin{remark}Observe that:
	\begin{itemize} 
		\item[(i)] the results presented in Proposition \ref{DevCond} also hold for a different choice of $\mathcal{F}_N$, that is, 
		$$
		\mathcal F_N= c_0\[\(\max_{\ell< L_N}C_{\ell;Z}\) \(1+\frac{1+\max_{\ell< L_N} \mu_{\max ; \ell }}{\min_{\ell< L_N}\mu_{\min; \ell}} \)\],
		$$
		which corresponds to the one used in \cite{BM:15};
		\item[(ii)] in order for this bound to make sense, we need that $$\log\(p\LN\)=o\(N\).$$ 
	\end{itemize}
\end{remark}

The third and final step is to establish a \emph{compatibility condition} that, whenever verified on the event $\S_N$, will allow us to bound both the prediction errors $\{\norm{\XlN \(\phiLASSO - \phib_\ell\) }_2^2\}$ and the estimation errors $\{\norm{\phiLASSO - \phib_\ell}_2^2\}$ by the same quantity. In this sense, it makes the errors compatible.

A symmetric $d \times d$ matrix $A$ satisfies the compatibility condition, also called restricted eigenvalue (RE) condition, with curvature $\alpha>0$ and tolerance $\tau>0$ ($A\sim RE(\alpha,\tau)$), if, for any $\theta \in \R^d$, 
\begin{equation}\label{RE}
\theta'A\theta\ge \alpha\norm{\theta}_2^2-\tau\norm{\theta}_1^2\,.
\end{equation}  

The next result gives some sufficient conditions in order to have $$\widehat\Gamma_{\ell,N} \sim RE(\alpha,\tau),$$ for some $\alpha$ and $\tau$, with high probability.

\begin{proposition}[Compatibility condition]\label{RE_thm}
	Consider the estimation problem \eqref{eq:lasso} and assume that Conditions \ref{cond:identify} and \ref{cond:stazionarity} hold. Define $q_N=\max_{\ell < \LN}q_\ell$. There exist some constants $c_1,c_2,c_3>0$ such that, if
	\begin{equation}\label{eq:RE-cond}
	N \succeq \max\{\omega_N^2,1\} q_N \log \(p \LN\), \text{ with } \quad \omega_N=c_3\max_{\ell<\LN} \frac{\mumax}{\mumin} ,
	\end{equation}
	then
	\begin{equation}\label{eq:RE_thm}
	\P\(\bigcap_{\ell=0}^{L_N-1}\{ \widehat\Gamma_{\ell,N}\sim RE(\alpha_\ell,\tau_\ell)\}\)\ge 1-c_1\,e^{-c_2\,N\,\min\{\omega_N^{-2},1\}},
	\end{equation}
	with
	\begin{equation}\label{anna}
	\alpha_\ell= \frac{\ClZ}{2\,\mumax}, \quad \text{and} \quad \tau_\ell=\alpha_\ell \, \max\{\omega_N^2,1\}\frac{\log \(p \LN\)}{N} . 
	\end{equation}
\end{proposition}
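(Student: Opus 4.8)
The plan is to transfer a lower bound on the smallest eigenvalue of the population covariance $\GammalN$ to its empirical counterpart $\GammalNest$, showing that on a high‑probability event the random fluctuation $\GammalNest-\GammalN$ consumes at most half of that eigenvalue. First I would treat the population side: since $\{\alm(t):t\in\Z\}$ is a stationary $\ARp$ process with spectral density $f_\ell$, the Toeplitz matrix $\GammalN$ obeys the classical bound $\Lambda_{\min}(\GammalN)\ge 2\pi\,\mf$. Inserting $f_\ell(\nu)=\frac{1}{2\pi}\ClZ/|\phi_\ell(e^{-i\nu})|^2$ and recalling \eqref{eq:mumu}, one finds $2\pi\,\mf=\ClZ/\mumax=2\alpha_\ell$, so that $\theta'\GammalN\theta\ge 2\alpha_\ell\norm{\theta}_2^2$ for every $\theta\in\R^p$. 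Hence proving \eqref{eq:RE_thm} reduces to showing that, with the stated probability,
$$\big|\theta'(\GammalNest-\GammalN)\theta\big|\le \alpha_\ell\norm{\theta}_2^2+\tau_\ell\norm{\theta}_1^2\qquad\text{for all }\theta\in\R^p,$$
since subtracting this from the population bound yields exactly $RE(\alpha_\ell,\tau_\ell)$ in the sense of \eqref{RE}.

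The heart of the argument is to control $|\theta'(\GammalNest-\GammalN)\theta|$ uniformly in $\theta$, which I would first establish on sparse vectors and then extend. Fix an integer $k$ and let $K(2k)$ denote the $2k$‑sparse unit vectors. For a single $v\in K(2k)$, the deviation bound \eqref{eq:vvDB} of Proposition \ref{prop:DevBound} controls $|v'(\GammalNest-\GammalN)v|$; covering the unit sphere of each $2k$‑dimensional coordinate subspace by a $1/4$‑net (of cardinality at most $\binom{p}{2k}9^{2k}$) and union‑bounding over net points and supports gives, for any level $\eta$,
$$\P\Big(\sup_{v\in K(2k)}|v'(\GammalNest-\GammalN)v|> 4\pi\,\MM(\widetilde f_\ell,2k)\,\eta\Big)\le \binom{p}{2k}9^{2k}\,2\,e^{-cN(2\ell+1)\min\{\eta^2,\eta\}},$$
the factor $2$ arising from the net and $\MM(\widetilde f_\ell,2k)$ from \eqref{eq:vvDB}. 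The crucial quantitative point is that a direct computation (as in \cite{BM:15}) gives $\MM(\widetilde f_\ell,2k)\le\MM(\widetilde f_\ell)\le C\,\ClZ/\mumin$, so that the ratio $\MM(\widetilde f_\ell,2k)/\alpha_\ell$ is of order $\mumax/\mumin$; this is precisely $\omega_N$ up to the constant $c_3$, explaining why the condition number governs both the rate and the sample‑size requirement.

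To pass from $K(2k)$ to all of $\R^p$ I would invoke the standard \emph{sparse‑to‑dense} transfer lemma used in \cite{BM:15}: if $\sup_{v\in K(2k)}|v'\Delta v|\le\delta$, then $|\theta'\Delta\theta|\le 27\,\delta\big(\norm{\theta}_2^2+\tfrac1k\norm{\theta}_1^2\big)$ for all $\theta$. I would calibrate by fixing the deviation level at a constant fraction of the eigenvalue, $\delta=\alpha_\ell/54$, which forces $\eta\asymp1/\omega_N$ and makes the curvature term equal to $\alpha_\ell$, the tolerance then being $27\delta/k=\alpha_\ell/(2k)$. Since $\eta\asymp1/\omega_N$ yields $\min\{\eta^2,\eta\}\asymp\min\{\omega_N^{-2},1\}$, the union bound above is small as soon as $cN(2\ell+1)\min\{\omega_N^{-2},1\}$ dominates the net entropy $\asymp k\log(p\LN)$, which permits taking $k$ as large as $k_\ast\asymp N\min\{\omega_N^{-2},1\}/\log(p\LN)$. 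The hypothesis \eqref{eq:RE-cond}, $N\succeq\max\{\omega_N^2,1\}\,q_N\log(p\LN)$, is exactly what guarantees $k_\ast\ge q_N$, and then the tolerance $\alpha_\ell/(2k_\ast)\asymp\alpha_\ell\max\{\omega_N^2,1\}\frac{\log(p\LN)}{N}$ matches $\tau_\ell$ in \eqref{anna}. A final union bound over $\ell=0,\dots,\LL$ contributes a factor $\LN$, absorbed into the $\log(p\LN)$ budget, and produces the simultaneous probability $1-c_1e^{-c_2N\min\{\omega_N^{-2},1\}}$.

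The main obstacle is the uniform (in $\theta$) control of the deviation form: \eqref{eq:vvDB} is pointwise in the sparse vector $v$, so the covering and union‑bound bookkeeping must be arranged so that the net entropy is genuinely dominated by the exponent, and the transfer lemma must be fed the correct sparsity level $k_\ast$ to recover exactly the curvature $\alpha_\ell$ and tolerance $\tau_\ell$ of \eqref{anna}. Keeping careful track of the stability measure $\MM(\widetilde f_\ell,2k)$, and above all of its ratio to $\alpha_\ell$ — which is what manufactures the condition number $\omega_N$ appearing in both \eqref{eq:RE-cond} and the final exponent — is the delicate quantitative part of the proof.
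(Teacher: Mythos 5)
Your proposal is correct and follows essentially the same route as the paper's proof: both reduce the problem to uniformly controlling $\abs{\theta'(\GammalNest-\GammalN)\theta}$ by combining the deviation bound \eqref{eq:vvDB} with a discretization/union bound over sparse vectors (Lemma F.2 in the supplement of \cite{BM:15}), the sparse-to-dense transfer lemma with its factor $27$ (Lemma 12 in the supplement of \cite{LW:12}), the population bound $\Lambda_{\min}\(\GammalN\)\ge \ClZ/\mumax=2\alpha_\ell$ coming from the spectral density, and the same calibration $\eta\asymp\omega_N^{-1}$, $s\asymp N\min\{\omega_N^{-2},1\}/\log\(p\LN\)$, followed by a union bound over $\ell<\LN$. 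The only discrepancies are immaterial constants (e.g., your $\delta=\alpha_\ell/54$ versus the paper's effective $\alpha_\ell/27$, which gives you curvature $3\alpha_\ell/2$ that you then weaken to $\alpha_\ell$, and the factor $p$ in $\mathcal{M}\(\tilde f_\ell\)\le p\,\mathcal{M}\(f_\ell\)$, which both you and the paper absorb into absolute constants since the order $p$ is fixed).
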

\begin{remark}
	The results presented in Proposition \ref{RE_thm} also hold for  $$\omega_N=c_3\frac{\max_{\ell< \LN} \mumax}{\min_{\ell< \LN} \mumin},$$ analogously to the findings in \cite{BM:15}.	
\end{remark}

\begin{remark}
	The compatibility condition on $\widehat\Gamma_{\ell,N}$ is a requirement on its smallest eigenvalue, which can be seen as a measure of the dependence of the random matrix columns. The sufficient condition \eqref{eq:RE-cond} ensures that, with high probability, the (sample) minimum eigenvalue of the matrix $\widehat\Gamma_{\ell,N}$ is bounded away from zero. 
\end{remark}

\begin{remark}
	Note that $0\le\mathcal{F}_N \le  \mathcal F$, where $ \mathcal F =\max_{\ell\ge0}\mathcal F\(\phib_\ell,C_{\ell;Z}\)$ exists finite. Indeed, 
	\begin{flalign*}
	\mathcal F\(\phib_\ell,C_{\ell;Z}\)&=c_0\[C_{\ell;Z} \(1+\frac{1+\mu_{\max; \ell}}{\mu_{\min; \ell}} \)\]\to 0 , \qquad \text{as } \ell \to \infty,
	\end{flalign*}
	since $C_{\ell;Z}$ converges to zero as $\ell$ goes to infinity and $a\le \mu_{\min; \ell} \le \mu_{\max; \ell} \le b$, with $a,b$ positive constants (independent of $\ell$), see Remark \ref{rmk:muminmax}. Similarly, it holds that $0\le\omega_N \le  \omega$, where 
	$$
	\omega=c_3\max_{\ell\ge0}  \frac{\mumax}{\mumin},$$
	and $q_N\le q=\max_{\ell\ge0}q_\ell$. In particular, all the results presented in this paper can be stated using $\mathcal{F}, \omega, q$ instead of $\mathcal F_N, \omega_N, q_N$. Without loss of generality, we can assume $q_N \ge 1$.
\end{remark}



\subsection{Oracle inequalities}\label{sub:oracle}
Oracle inequalities are used to estimate the accuracy of the $\kLASSO$. Observe that, in general, $\kLASSO$ depends on the penalty parameter $\lambda$, according to \eqref{eq:lasso}. As a consequence, given a proper choice of $\lambda$, oracle inequalities produce upper bounds for the estimation error with high probability. Such upper bounds are characterized by a multiplicative factor $\log (p\LN)$; roughly speaking, this factor is the cost for not knowing explicitly the set of non-zero coefficients.
\begin{theorem}\label{th:MR-preciso}
	Consider the estimation problem \eqref{eq:lasso} and assume that Conditions \ref{cond:identify} and \ref{cond:stazionarity} hold. Moreover, suppose that, for any $\ell = 0,\ldots,\LL$, $\GammalNest \sim RE(\alpha_\ell,\tau_\ell)$ with $\ql\tau_\ell \le \alpha_\ell/32$ and that 
	$(\widehat\Gamma_{\ell,N}, \widehat\gamma_{\ell,N})$ satisfies the deviation condition almost surely, that is,
	\begin{equation}\label{DB}
	\norm{\widehat\gamma_{\ell,N}-\widehat\Gamma_{\ell,N}\phib_\ell}_{\infty}\leq \mathcal F_N\, \sqrt{\frac{\log\( p \LN\)}{N}} \quad a.s. \quad .
	\end{equation}
	Then, for any $\lambda = \lambda_{N} \ge4 \mathcal F_N\sqrt{\log (pL_N)/N}$, where $ \mathcal F_N =\max_{\ell<L_N}\mathcal F\(\phib_\ell,C_{\ell;Z}\)$, any solution $\widehat{\mathbf{k}}^{\textnormal{lasso}}_N$ of \eqref{eq:lasso} satisfies 
	\begin{flalign}
	\norm{\widehat{\mathbf{k}}^{\textnormal{lasso}}_N-\mathbf{k}}^2_{L^2} &\le \frac{18}{\pi^2}\, \lambda_{N}^2\,\sum_{\ell=0}^{\LL}\,\frac{\ql}{\alpha_\ell^2} \,(2\ell+1)+\Big \| \ka - \ka_{N} \Big  \|^2_{L^2}\,;\label{eq:oraclek1}
	\end{flalign}
	moreover, under the additional Condition \ref{cond:smooth}, it holds that
	\begin{flalign}
	\norm{\widehat{\mathbf{k}}^{\textnormal{lasso}}_N-\mathbf{k}}_{L^\infty} &\le \frac{3}{\pi}\, \lambda_{N} \,\sum_{\ell=0}^{\LL}\,\frac{\sqrt{\ql}}{\alpha_\ell} \,(2\ell+1)+\Big \| \ka - \ka_{N} \Big \|_{L^\infty}\,.\label{eq:oraclek2}
	\end{flalign}
\end{theorem}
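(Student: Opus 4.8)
The plan is to run the classical LASSO oracle argument separately at each multipole $\ell$ and then reassemble the per-multipole estimation errors into the functional $L^2$ and $L^\infty$ norms via the orthogonality of the Legendre polynomials. Since the restricted-eigenvalue property and the deviation condition are \emph{hypotheses} here (their probabilistic cost having been paid in Propositions \ref{DevCond} and \ref{RE_thm}), the statement is purely deterministic and no concentration arguments are needed.

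Fix $\ell\in\{0,\dots,\LL\}$, set $v_\ell=\phiLASSO-\phib_\ell$, let $S\subset\{1,\dots,p\}$ be the support of $\phib_\ell$ (so $\abs{S}=\ql$), and write $v_{\ell,S}$, $v_{\ell,S^c}$ for the restrictions of $v_\ell$ to $S$ and its complement. Starting from the basic inequality of Proposition \ref{prop:basic}, I would bound the empirical-process term by H\"older's inequality: recalling from \eqref{eq:empproc} that $\XlN'\ElN/(N(2\ell+1))=\gammalN-\GammalNest\phib_\ell$, it is at most $2\norm{v_\ell}_1\,\norm{\gammalN-\GammalNest\phib_\ell}_\infty$. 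The deviation condition \eqref{DB} together with $\lambda_N\ge 4\mathcal F_N\sqrt{\log(p\LN)/N}$ forces $\norm{\gammalN-\GammalNest\phib_\ell}_\infty\le\lambda_N/4$, so this term is at most $\tfrac{\lambda_N}{2}\norm{v_\ell}_1$. Substituting and using $\norm{\phib_\ell}_1-\norm{\phib_\ell+v_\ell}_1\le\norm{v_{\ell,S}}_1-\norm{v_{\ell,S^c}}_1$ (the penalty coefficient being $\lambda_N$ after the normalization in \eqref{eq:philasso}), the nonnegativity of the left-hand side $v_\ell'\GammalNest v_\ell$ yields both the \emph{cone condition} $\norm{v_{\ell,S^c}}_1\le 3\norm{v_{\ell,S}}_1$, whence $\norm{v_\ell}_1\le 4\sqrt{\ql}\,\norm{v_\ell}_2$, and the prediction bound $v_\ell'\GammalNest v_\ell\le\tfrac{3\lambda_N}{2}\norm{v_{\ell,S}}_1$.

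I would then invoke $\GammalNest\sim RE(\alpha_\ell,\tau_\ell)$: by \eqref{RE}, $v_\ell'\GammalNest v_\ell\ge\alpha_\ell\norm{v_\ell}_2^2-\tau_\ell\norm{v_\ell}_1^2$, and inserting the cone bound $\norm{v_\ell}_1^2\le 16\,\ql\norm{v_\ell}_2^2$ together with the hypothesis $\ql\tau_\ell\le\alpha_\ell/32$ gives $v_\ell'\GammalNest v_\ell\ge\tfrac{\alpha_\ell}{2}\norm{v_\ell}_2^2$. Combining this with the prediction bound and $\norm{v_{\ell,S}}_1\le\norm{v_\ell}_1\le 4\sqrt{\ql}\norm{v_\ell}_2$ produces $\tfrac{\alpha_\ell}{2}\norm{v_\ell}_2^2\le 6\lambda_N\sqrt{\ql}\norm{v_\ell}_2$, i.e. the key per-multipole estimate
\begin{equation*}
\norm{\phiLASSO-\phib_\ell}_2=\norm{v_\ell}_2\le\frac{12\,\lambda_N\sqrt{\ql}}{\alpha_\ell}.
\end{equation*}
The degenerate case $\ql=0$ (that is, $\phib_\ell=0$) is consistent, since the cone condition then forces $v_\ell=0$.

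Finally I would pass to the functional norms. Writing $\kLASSO-\ka_N=\sumltrunc v_\ell\,\tfrac{2\ell+1}{4\pi}P_\ell$ and using the orthogonality relation \eqref{eq:duplication}, a componentwise computation gives $\norm{\kLASSO-\ka_N}_{L^2}^2=\sumltrunc\tfrac{2\ell+1}{8\pi^2}\norm{v_\ell}_2^2$, into which the per-multipole estimate feeds the stochastic term $\tfrac{18}{\pi^2}\lambda_N^2\sumltrunc\tfrac{\ql}{\alpha_\ell^2}(2\ell+1)$; for the sup norm, $\abs{P_\ell}\le 1$ on $[-1,1]$ and the triangle inequality give $\norm{\kLASSO-\ka_N}_{L^\infty}\le\sumltrunc\tfrac{2\ell+1}{4\pi}\norm{v_\ell}_2\le\tfrac{3}{\pi}\lambda_N\sumltrunc\tfrac{\sqrt{\ql}}{\alpha_\ell}(2\ell+1)$. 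The bias is added last: since $\ka-\ka_N$ consists only of the multipoles $\ell\ge\LN$, it lies in the orthogonal complement of $\PpN$, so Pythagoras yields $\norm{\kLASSO-\ka}_{L^2}^2=\norm{\kLASSO-\ka_N}_{L^2}^2+\norm{\ka_N-\ka}_{L^2}^2$, which is exactly \eqref{eq:oraclek1}, while the triangle inequality in $L^\infty$ gives \eqref{eq:oraclek2}. The only genuine care required is in tracking the $(2\ell+1)$ weights from the spherical-harmonic normalization and the constants along the basic-inequality/cone/RE chain; there is no real obstacle, precisely because the two data-dependent conditions are assumed outright rather than proved here.
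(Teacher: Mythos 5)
Your proposal is correct and follows essentially the same route as the paper's proof: basic inequality plus deviation condition to get the cone condition $\norm{v_{\ell,S^c}}_1\le 3\norm{v_{\ell,S}}_1$, the RE hypothesis with $\ql\tau_\ell\le\alpha_\ell/32$ to obtain $v_\ell'\GammalNest v_\ell\ge\tfrac{\alpha_\ell}{2}\norm{v_\ell}_2^2$, the resulting per-multipole bound $\norm{v_\ell}_2\le 12\sqrt{\ql}\,\lambda_N/\alpha_\ell$, and finally Legendre orthogonality (Pythagoras) for the $L^2$ bound and the triangle inequality with $\abs{P_\ell}\le 1$ for the $L^\infty$ bound, with matching constants $\tfrac{18}{\pi^2}$ and $\tfrac{3}{\pi}$. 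Your explicit handling of the degenerate case $\ql=0$ is a small refinement the paper leaves implicit, but it does not change the argument.
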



\paragraph{Rates of convergence} 

To discuss the possible rates of convergence in \eqref{eq:oraclek1}-\eqref{eq:oraclek2}, let us choose $\lambda_N=4 \mathcal F_N\sqrt{\log (pL_N)/N}$ and $L_N\sim N^d$. Moreover, we impose some semiparametric structure to the set $\{\phi_{\ell;j}:\ell\geq 0\}$, that is,  
\begin{equation*}
\abs{\phi _{\ell;j}}\leq G_j\ell ^{-\beta _{j}},  
\end{equation*}
where $\beta_{j}>1$ and $G_{j}>0$ (see also \cite{cm19}). 
Note that, since we are looking at the asymptotic behaviour as $N\sto\infty$, the sufficient condition \eqref{eq:RE-cond} automatically holds and the curvature $\alpha_\ell\sim \ClZ$. In particular, a standard assumption for the behaviour of the power spectrum of a spherical white noise is $\ClZ\sim \ell^{-\alpha}$, with $\alpha>2$, see \cite{MP:11}.
As a consequence, in this framework, we have	
\begin{flalign*}
\norm{\widehat{\mathbf{k}}^{\textnormal{lasso}}_N-\mathbf{k}}^2_{L^2} &\le \frac{18}{\pi^2}\, \lambda_{N}^2\,\sum_{\ell=0}^{\LL}\,\frac{\ql}{\alpha_\ell^2} \,(2\ell+1)+\sum_{\ell=\LN}^\infty \norm{\phib_\ell}_2^2\frac{2\ell+1}{8\pi^2} \\
&\le const \,\[ \frac{\log N}{N} \,\sum_{\ell=0}^{\LL}\,\ell^{2\alpha}\,(2\ell+1)+\sum_{\ell=\LN}^\infty \ell^{-2\beta} (2\ell+1)\]\\
&=O\( \log N \, N^{2d(\alpha+1)-1}+ N^{2d(1-\beta)}\),
\end{flalign*}
where $\beta=\min_{j=1,\dots,p}\beta_j$, and, in order to ensure consistency, we can choose 
$$0<d < \frac{1}{2(\alpha + 1)}\,.$$
Analogously, imposing this time $\beta_j>2$, one has that
\begin{flalign*}
\norm{\widehat{\mathbf{k}}^{\textnormal{lasso}}_N-\mathbf{k}}_{L^\infty} &\le \frac{3}{\pi}\, \lambda_{N} \,\sum_{\ell=0}^{\LL}\,\frac{\sqrt{\ql}}{\alpha_\ell} \,(2\ell+1)+\sum_{\ell=\LN}^\infty \norm{\phib_\ell}_2\frac{2\ell+1}{4\pi}\\
&=O\( \(\log N\)^{1/2} \, N^{d(\alpha+2)-\frac{1}{2}}+ N^{d(2-\beta)}\)\,
\end{flalign*}
and in this case the consistency in the supremum norm is reached for any 
$$0<d<\frac{1}{2(\alpha + 2)}\,.$$
Note that, just like in the multivariate (i.e. non-functional) case, not knowing the sparsity pattern only comes
at a logarithmic cost in performance (see Theorem 18 in \cite{cm19}).
Moreover, we stress that the parameter $\alpha$ can be estimated via a Whittle-like procedure, see \cite{dlmejs,dlm}, which could be useful to choose a suitable rate for $L_N$.


\section{Some numerical results}\label{sec:simulations}
In this Section we will describe the numerical implementation to support the results provided along the paper. More specifically, here we briefly discuss the performance of the LASSO estimator $\kLASSO$ under sparsity assumptions. 

Fixed $N=300$ and $L_N = L = 50$, we are concerned with the empirical evaluation of the $L^2$-risk of $\kLASSO$ for the penalty parameters $\lambda_{i} = 2\cdot10^{i-6}$, $i=1,\ldots,6$, in comparison with the one of the non-penalized estimator described in \cite{cm19}, corresponding to the case $\lambda_0=0$. 
We remark that our simulations can be considered as a hint, in view of future applications on real data.
In what follows, we consider four different case studies, all belonging to the class of $\operatorname{SPHAR}(2)$ processes, so that \eqref{eq:sphar} becomes $$T\(x,t\)= \Phi_1 \(T\(\cdot,t-1\)\) \(x\)+\Phi_2 \(T\(\cdot,t-2\)\) \(x\)+Z\(x,t\).$$ 

In the first case, the random field $T_1$ is strongly sparse, in the sense that the only non-null eigenvalues are $$\phi_{2;1}=-0.7, \quad \phi_{3;2}=0.5.$$ In the second case, the random field $T_2$ is characterized by less sparsity; in particular, the non-null coefficients are $$\phi_{30;1}=-0.72,\quad \phi_{31;1}=0.31,\quad \quad\phi_{32;1}=0.85, \quad \phi_{2,2}=0.25, \phi_{3,2}=-0.87,\quad\phi_{5,2}=-0.98.$$ 
In the third case, the random field $T_3$ is not sparse, even if the eigenvalues are taken to be relevant only on the first $20$ multipoles, that is, $\phi_{\ell,j}\propto\ell^{-2}$ for $\ell\ge20$ and $j=1,2$.
Finally, in the fourth case $T_4$, all the multipoles are assumed to be relevant. 

\begin{table}[H]
	\centering
	\begin{tabular}{|c|c|c|c|c|}
		\hline
		MSE & $T_1$ & $T_2$ & $T_3$ & $T_4$\\ \hline
		$\lambda_0$ & 0.00218 & 0.00212 & 0.00189 & 0.00143\\
		$\lambda_1$ & 0.00214 & 0.00208 & 0.00185 & 0.00144\\
		$\lambda_2$ & 0.00181 & 0.00175 & 0.00153 & 0.00149 \\
		$\lambda_3$ & 0.00083 & 0.00087 & 0.00071 & 0.00650\\
		$\lambda_4$ & 0.00032 & 0.00478 & 0.00190 & 0.46580\\
		$\lambda_5$ & 0.00015 & 0.15363 & 0.10620 & 7.32961\\ 
		$\lambda_6$ & 0.00049 & 0.53786 & 1.57303 & 15.18901\\
		\hline 
	\end{tabular}
	\caption{Values of the mean squared error (MSE) for the four case studies $T_1, \dots, T_4$, by varying the penalty parameter $\lambda$. Note that $\lambda_0$ corresponds to the non-penalized estimation.}
	\label{tab:1}
\end{table}

Table \ref{tab:1} collects the values of the empirical mean squared error (MSE) associated with the four models of interest. In particular, for $B=1000$ replications, we have
\begin{equation*}
\operatorname{MSE}\(\kLASSO,\ka\)=\sum_{j=1}^2 \{\frac{1}{B\cdot G}\sum_{b=1}^B\sum_{g=1}^G\(\widehat{k}^{\operatorname{lasso}}_j(z_g)-k_j(z_g)\)^2\}\,,
\end{equation*}
where $\{z_1,\dots, z_G\}$, $G=2000$, is an equally spaced grid over $[-1,1]$.
As expected, LASSO-type estimators provide smaller MSE when we consider highly sparse models ($T_1$ and $T_2$). Regarding $T_3$, the situation does not really change because, after the $20$-th multipole, the values of the eigenvalues are very small. On the contrary, for the model $T_4$, the penalized estimator always performs poorly when $\lambda$ grows. Note that for $\lambda=\lambda_1$ the penalization is very small and, as a consequence, the penalized and non-penalized estimators are almost equivalent.  

\begin{minipage}{\linewidth}
    \centering
      \begin{minipage}{0.45\linewidth}
          \begin{figure}[H]
          	 \includegraphics[width=\linewidth]{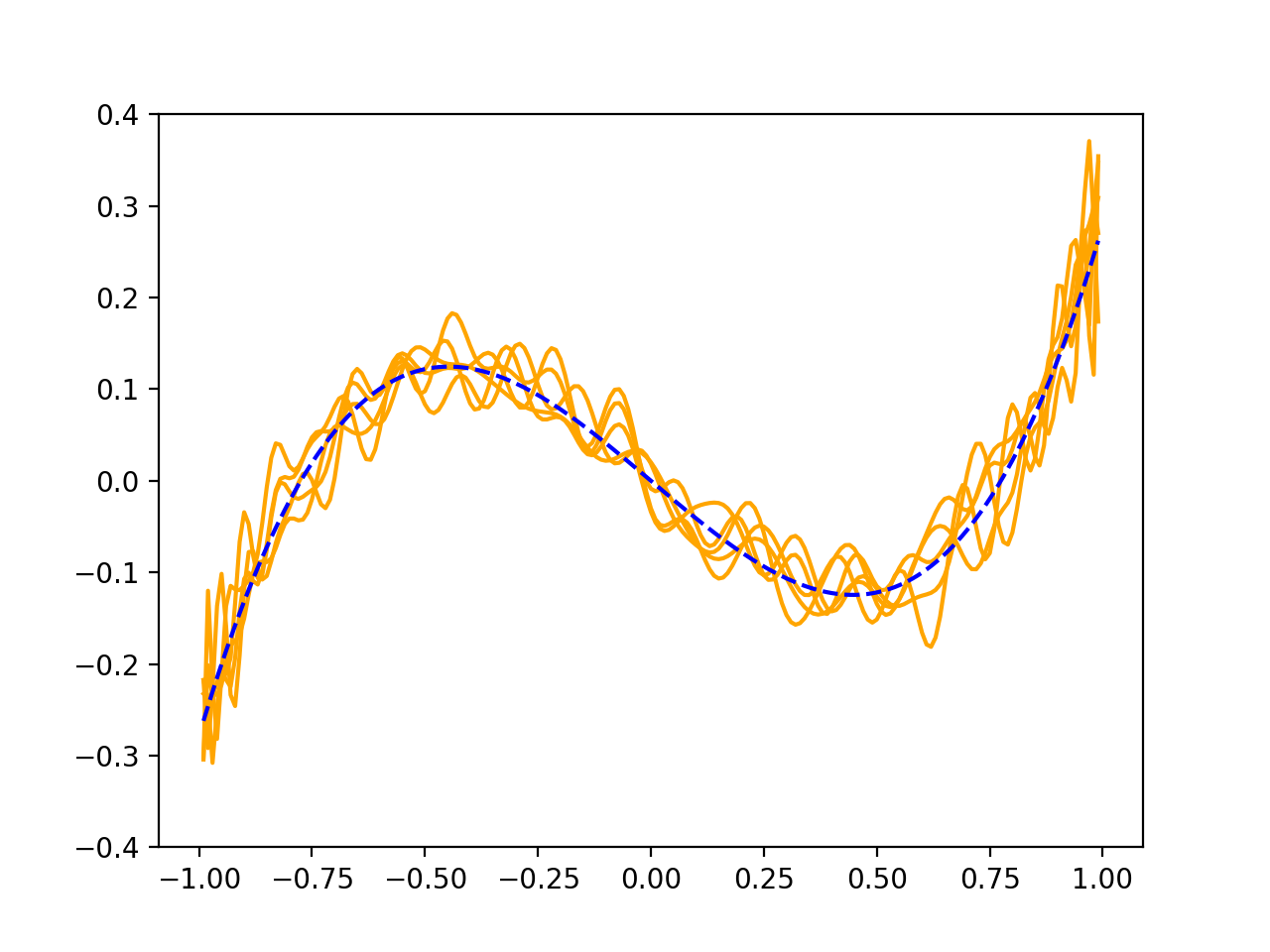} \\
                      \end{figure}
      \end{minipage}
      \hspace{0.05\linewidth}
      \begin{minipage}{0.45\linewidth}
          \begin{figure}[H]
          	 \includegraphics[width=\linewidth]{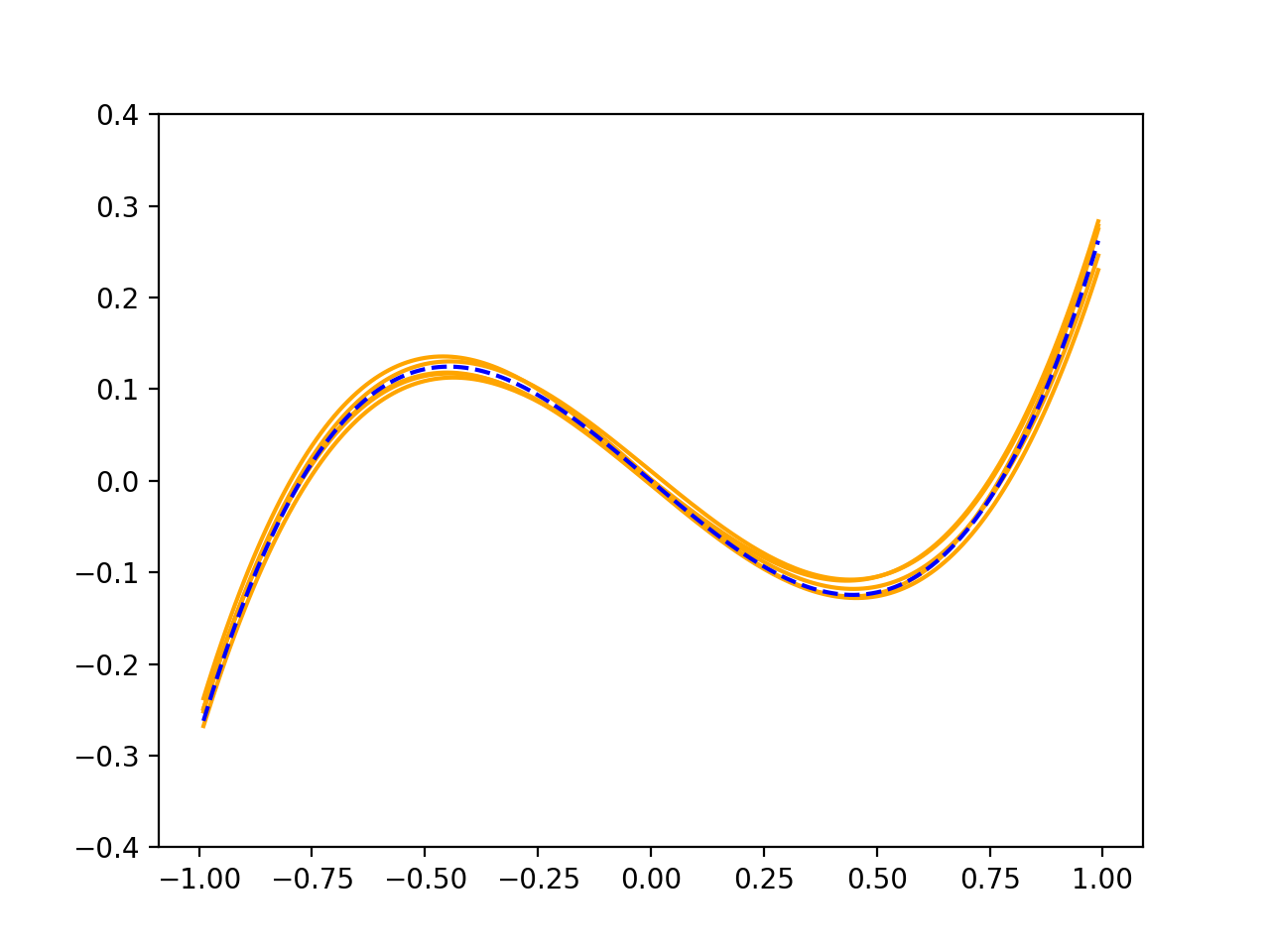} \\
          \end{figure}
      \end{minipage}
      \captionof{figure}{Five estimates (orange lines) of $k_2$ for the model $T_1$, with $\lambda=\lambda_5$. The true $k_2$ is the blue dashed line.}\label{fig:1}
  \end{minipage}
\begin{minipage}{\linewidth}
    \centering
      \begin{minipage}{0.45\linewidth}
          \begin{figure}[H]
               \includegraphics[width=\linewidth]{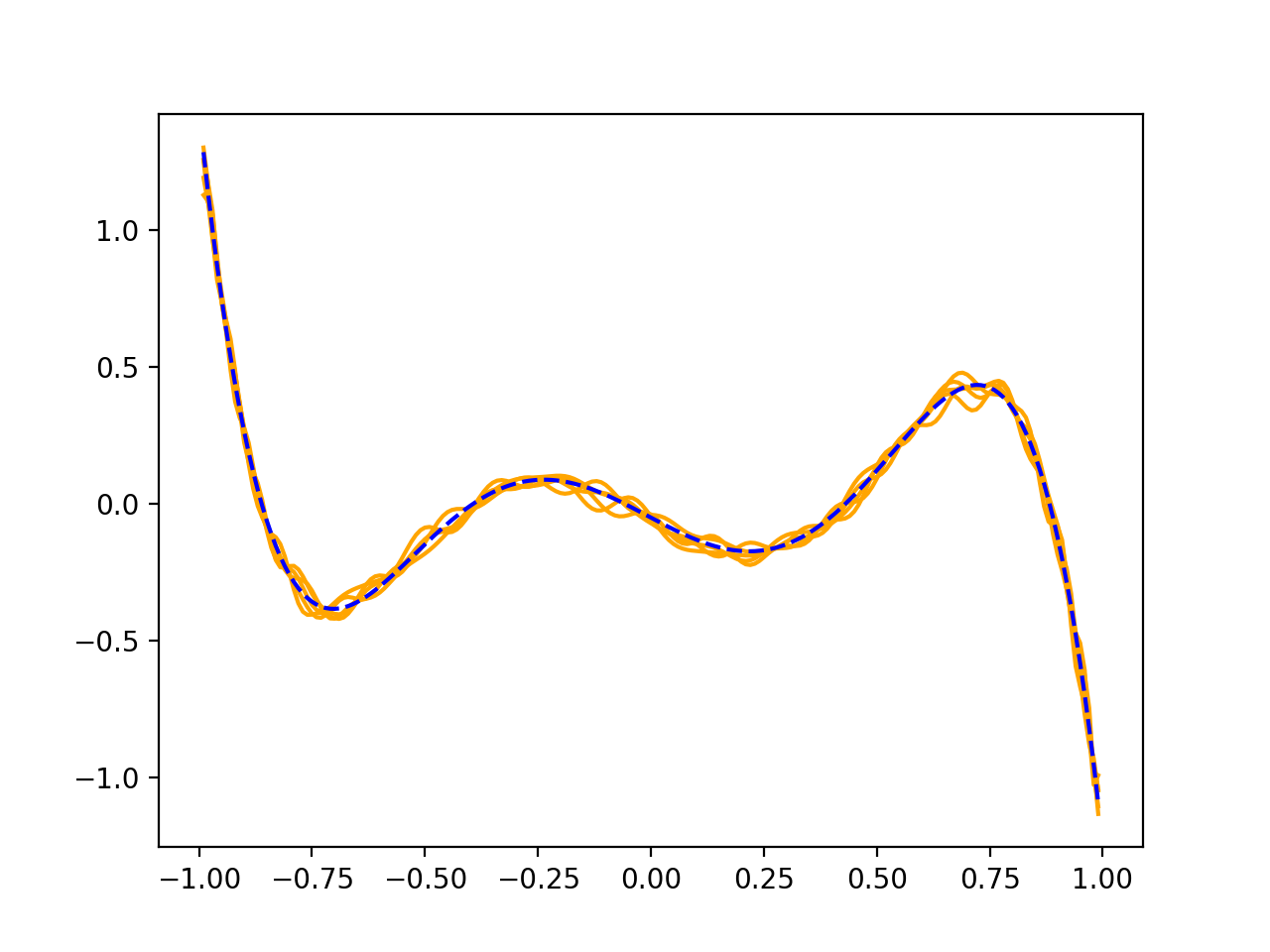} \\
                      \end{figure}
      \end{minipage}
      \hspace{0.05\linewidth}
      \begin{minipage}{0.45\linewidth}
          \begin{figure}[H]
               \includegraphics[width=\linewidth]{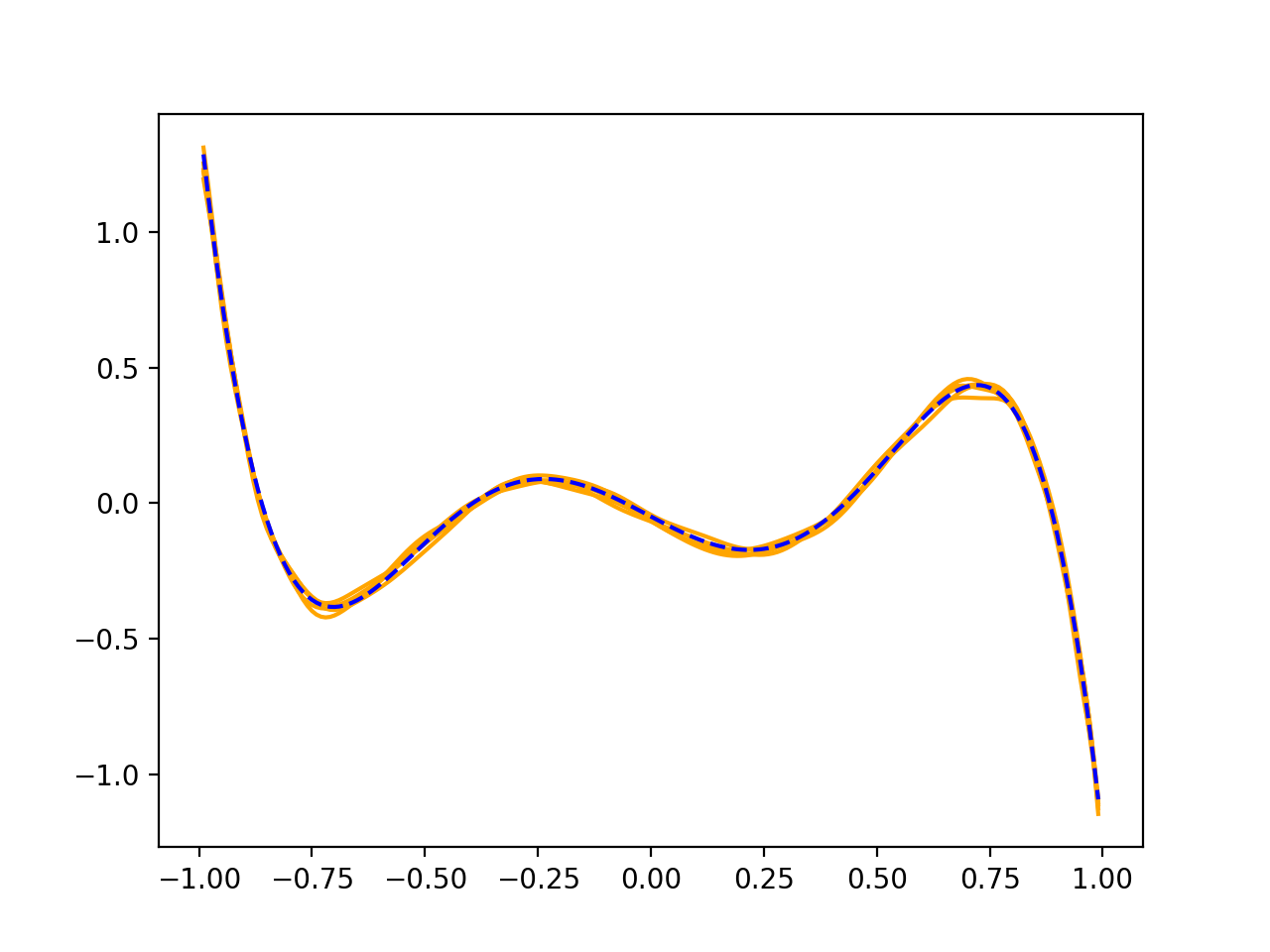} \\
          \end{figure}
      \end{minipage}
      \captionof{figure}{Five estimates (orange lines) of $k_2$ for the model $T_2$, with $\lambda=\lambda_3$. The true $k_2$ is the blue dashed line.}\label{fig:2}
  \end{minipage}

\begin{minipage}{\linewidth}
    \centering
      \begin{minipage}{0.45\linewidth}
          \begin{figure}[H]
                              \includegraphics[width=\linewidth]{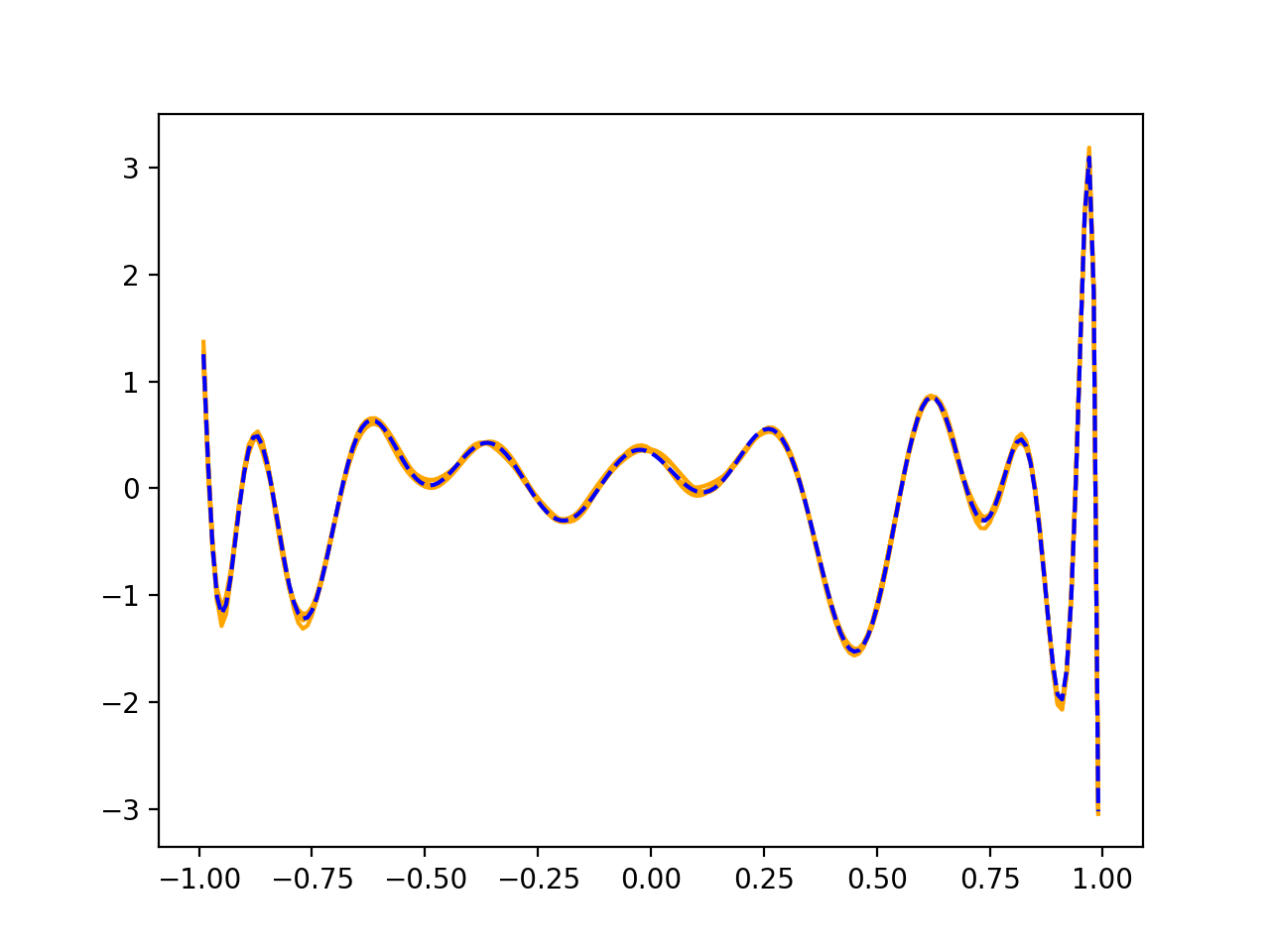} \\
          \end{figure}
      \end{minipage}
      \hspace{0.05\linewidth}
      \begin{minipage}{0.45\linewidth}
          \begin{figure}[H]
                              \includegraphics[width=\linewidth]{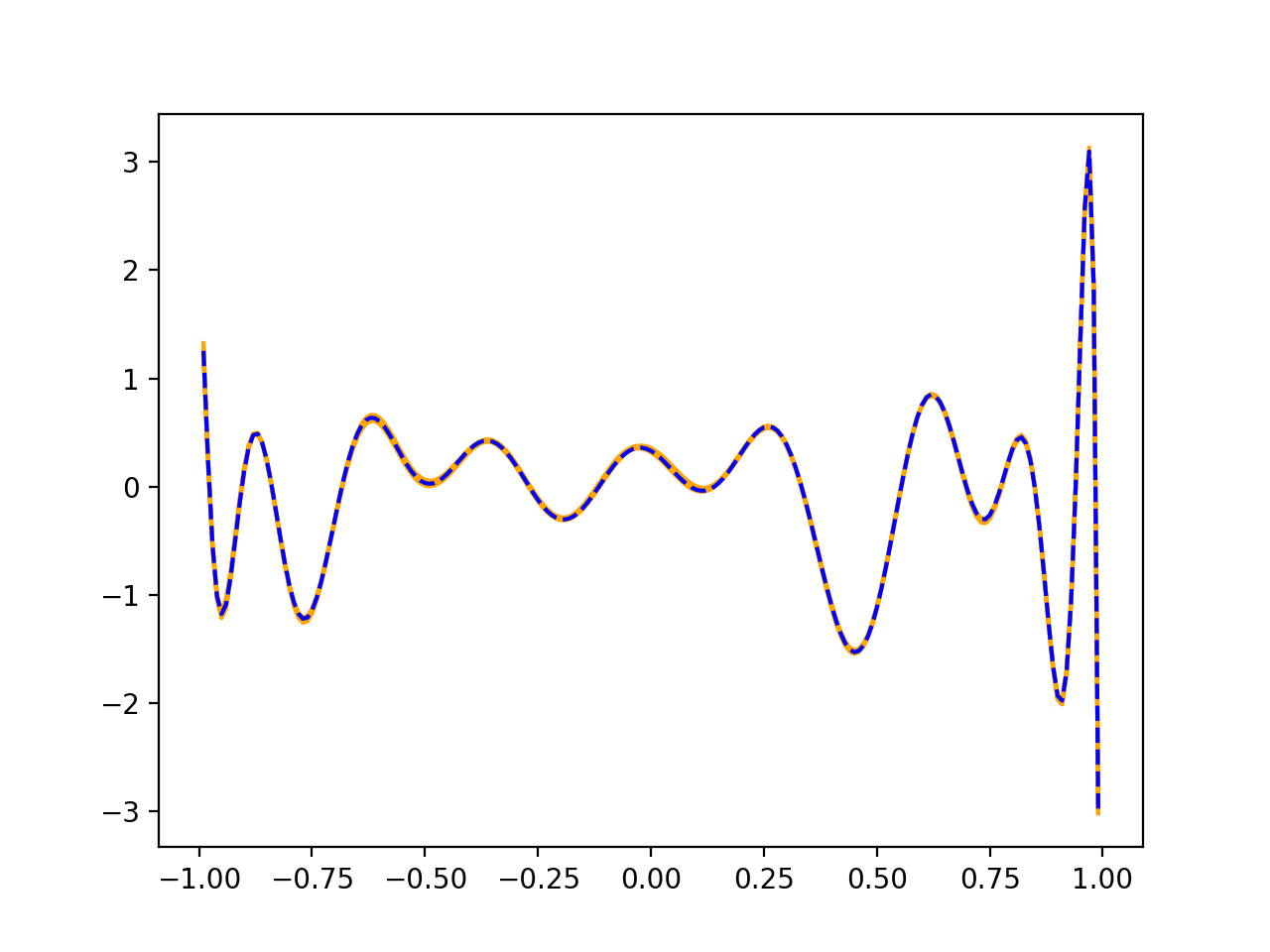} \\
          \end{figure}
      \end{minipage}
      \captionof{figure}{Five estimates (orange lines) of $k_2$ for the model $T_3$, with $\lambda=\lambda_3$. The true $k_2$ is the blue dashed line.}\label{fig:3}
  \end{minipage}

\bigskip

Figures \ref{fig:1}-\ref{fig:3} illustrate the functional forms of the estimated kernel, compared to the real one. The left panels present the non-penalized estimates of $k_2$ while the right panels contain the corresponding penalized ones. Heuristically, the (best) penalized estimates for $T_1$ and $T_2$ reconstruct the true kernel function better than the non-penalized ones, which show an oscillatory behavior (undersmoothing) due to the lack of selection of the relevant multipoles. 
For the model $T_3$, which is non-sparse but has few relevant multipoles, the difference between the two functional estimations is not significant. 

\begin{remark}[{\bf Penalty parameter selection}]
Looking at Table \ref{tab:1} it is clear that the role of the penalty parameter is crucial. 
Our theoretical results suggest possible rates of the penalty parameter that imply consistency of the kernel estimator. However, we cannot directly compute the optimal value of $\lambda$ due to the presence of unknown constants. As a consequence, a careful selection of the penalty parameter using data driven methods needs to be conducted in order to make our procedure applicable in practice. The idea could be to implement a \emph{functional cross-validation}, which takes into account all the frequencies simultaneously; this is not the focus of our work but it can be addressed in future more applied works.  
\end{remark}

\section{Proofs}\label{sec:proofs}
In the present section we prove the bounds showed in Section \ref{sub:bounds} as well as our main theorem. Many of the proofs have arguments which are broadly similar to those given for related results in \cite{BM:15}.

\begin{proof}[Proof of Proposition \ref{prop:DevBound}]
	Let us define $J=\text{supp}(v)=\{j_1,\dots,j_\rr\}\subset\{1,\dots,p\}$, $\rr \ge 1$, and 
	$$
	W_{\ell,J}=X_{\ell,N}v=\sum_{j\in J}v_j \mathbf{Y}_{\ell, N}(j)\,.
	$$
	Then, $Q_{\ell,J}=\Ex\[W_{\ell,J}W_{\ell,J}'\]=B_{\ell,J}\otimes I_{2\ell+1}$, where $B_{\ell,J}$ is the covariance matrix of the random vector 
	$$
	\sum_{j\in J}v_j\begin{bmatrix}
	a_{\ell,m}(n-j)  \\
	\vdots \\
	a_{\ell, m}(p+1-j)
	\end{bmatrix} 
	\qquad \text{for any $m=-\ell,\dots,\ell$. }
	$$
	As a consequence, $\norm{Q_{\ell,J}}_{op}=\norm{B_{\ell,J}}_{op}\le 2\pi \mathcal M(\widetilde f_\ell,\rr)$ (see \cite[Proposition 2.4]{BM:15}) and \eqref{eq:vvDB} is proved. 
	To prove \eqref{eq:uvDB}, note that
	\begin{align*}
	&2 \abs{ u' \(\widehat\Gamma_{\ell,N}-\Gamma_\ell \) v
	}  \le \abs{ u' \(\widehat\Gamma_{\ell,N}-\Gamma_\ell
		\)u } + \abs{ v' \(\widehat\Gamma_{\ell,N}-\Gamma_\ell\)v }+ \abs{ (u+v)' \(\widehat\Gamma_{\ell,N}-\Gamma_\ell
		\) (u+v) },
	\end{align*}
	and $u+v$ is $2r$-sparse with $\abs{  u +v}   \le2$. The result follows by
	applying \eqref{eq:vvDB} separately on each of the three terms
	on the right.
	The element-wise deviation bound \eqref{eq:ijDB} is obtained by choosing $u=e_i$, $v=e_j$.
	
	Let us now prove \eqref{eq:yeDB}. Recall \eqref{eq:sottoprocessi}; the following decomposition holds
	\begin{align*}
	&\frac{2}{N(2\ell+1)}\sum_{m=-\ell}^\ell \sum_{t=p+1}^n a_{\ell, m}(t-h)a_{\ell, m;Z}(t) \\	& \qqqquad= \[ \frac{1}{N(2\ell+1)} \sum_{m=-\ell}^\ell \sum
	_{t=p+1}^n \( \alm(t-h) +\almZ (t)\)^2- (C_\ell + C_{\ell;Z}) \]\\
	& \qqqquad- \Biggl[ \frac{1}{N(2\ell+1)} \sum_{m=-\ell}^\ell \sum_{t=p+1}^n
	\alm(t-h)^2 -C_\ell \Biggr]  - \Biggl[ \frac{1}{N(2\ell+1)} \sum_{m=-\ell}^\ell\sum
	_{t=p}^T \almZ(t)^2-C_{\ell;Z} \Biggr] .
	\end{align*}
	Implementing \eqref{eq:vvDB} for $v=e_h$, we have 
	$$
	\P \(\abs{  \frac{1}{N(2\ell+1)} \sum_{m=-\ell}^\ell \sum_{t=p+1}^n
		\alm(t-h)^2 -C_\ell} > 2\pi\mathcal{M}(\widetilde{f}_\ell,1) \eta \) \le 2e^{-c\,N(2\ell+1) \min\{ \eta^2, \eta\}}\,, 
	$$
	which implies
	\begin{equation*}
	\P \(\abs{ \frac{1}{N(2\ell+1)} \sum_{m=-\ell}^\ell \sum_{t=p+1}^n
		\alm(t-h)^2 -C_\ell} > 2\pi\mathcal{M}(f_\ell) \eta \) \le 2e^{-c\,N(2\ell+1) \min\{ \eta^2, \eta\}}\,,
	\end{equation*}
	where we used the fact that $\mathcal{M}(\widetilde{f}_\ell,1)=\mathcal{M}(f_\ell)$.
	Following steps that are analogous to the ones that led to \eqref{eq:vvDB} (setting $v\in \R$, $v=1$, and obviously $\rr=1$), one can show that 
	$$
	\P \(\abs{ \frac{1}{N(2\ell+1)} \sum_{m=-\ell}^\ell\sum_{t=p}^T \almZ(t)^2-C_{\ell;Z}} > 2\pi\mathcal{M}(f_{	\ell;Z}) \eta \) \le 2e^{-c\,N(2\ell+1) \min\{ \eta^2, \eta\}}\,, 
	$$
	and that, for any fixed $h=1,\dots, p$, 
	\begin{equation*}
	\begin{split}
	&\P \(\abs{ \frac{1}{N(2\ell+1)} \sum_{m=-\ell}^\ell \sum
		_{t=p+1}^n \( \alm(t-h) +\almZ (t)\)^2- (C_\ell + C_{\ell;Z})} > 2\pi\mathcal{M}(f_{\ell; T+Z}) \eta \) \\
	&\qqqquad\qqqquad\qqqquad\le 2e^{-c\,N(2\ell+1) \,\min\{ \eta^2, \eta\}}\,, 
	\end{split}
	\end{equation*}
	Moreover,
	$$
	f_{\ell;T+Z}(\lambda)=f_{\ell}(\lambda)+f_{\ell; Z}(\lambda)+2f_{\ell;(T,Z)}(\lambda)\,,
	$$
	which implies, $\mathcal{M}(f_{\ell; T+Z}) \le  \mathcal{M}(f_{\ell})+\mathcal{M}(f_{\ell;Z})+\mathcal{M}(f_{\ell;(T,Z)})$, where $
	f_{\ell;T+Z}(\lambda)$ is the spectral density of the process $\{\alm(t-h)+\almZ(t), \,t\in\Z\}$ and $f_{\ell;(T,Z)}$ is the spectral density of the joint process $\{\(\alm(t-h),\almZ(t)\), \,t\in\Z\}$.
	
	Now, using the obvious implications
	$$
	\{\abs{X_1+X_2+X_3}>a\} \subset \{\abs{X_1}+\abs{X_2}+\abs{X_3}>a\} \subset \bigcup\limits_{i=1}^3\{\abs{X_i}>\frac{a}{3}\}\,,
	$$  
	we have
	\begin{align*}
	&\P \(\abs{\frac{2}{N(2\ell+1)}\sum_{m=-\ell}^\ell \sum_{t=p+1}^n \alm(t-h)\almZ(t)} > 2\pi \( \mathcal{M}(f_{\ell})+\mathcal{M}(f_{\ell;Z})+\mathcal{M}(f_{\ell;(T,Z)})\)\eta \) \\
	&\qqqquad\qqqquad\qqqquad	\le 6 e^{-c\,N(2\ell+1) \min\{ \eta^2, \eta\}}\,, 
	\end{align*}
	Following the last steps of the proof of Proposition 2.4 in \cite{BM:15}, we obtain
	$$
	2\pi\mathcal{M}(f_{\ell})\le \frac{C_{\ell;Z}}{\mu_{\min; \ell}}\,, \quad 2\pi\mathcal{M}(f_{\ell;Z})= C_{\ell;Z}\, \quad \text{and}\quad 2\pi\mathcal{M}\(f_{\ell;(T,Z)}\)\le \frac{C_{\ell;Z}\,\mu_{\max; \ell}}{\mu_{\min; \ell}}\,,
	$$
	which finally implies \eqref{eq:yeDB}.

\end{proof}

\begin{proof}[Proof of Proposition \ref{prop:basic}]
	Since $\phiLASSO$ is the solution of the minimization problem \eqref{eq:lasso2}, it follows that
	$$
	\frac{\norm{\mathbf{Y}_{\ell,N}-X_{\ell,N}\phiLASSO}^2_2}{N(2\ell+1)}+\lambda \norm{\phiLASSO}_1\leq \frac{\norm{\YlN - \XlN\phib_\ell}^2_2}{N(2\ell+1)}+\lambda \norm{\phib_\ell}_1\,,
	$$
	which, using the definition of $\mathbf{Y}_{\ell,N}$, becomes
	$$
	\frac{\norm{X_{\ell,N} \phib_\ell+ \ElN -\XlN \phiLASSO}^2_2}{N(2\ell+1)}+\lambda \norm{\phiLASSO}_1\leq \frac{\norm{\ElN}^2_2}{N(2\ell+1)}+\lambda \norm{\phib_\ell}_1\,.
	$$
	Now, we have
	$$
	\frac{\norm{X_{\ell,N} \phib_\ell-X_{\ell,N}\phiLASSO}^2_2}{N(2\ell+1)}+\frac{\norm{\ElN}^2_2}{N(2\ell+1)}-2\frac{\(\phiLASSO-\phib_\ell\)'X_{\ell,N}'\ElN}{N(2\ell+1)}+ \lambda  \norm{\phiLASSO}_1
	$$
	$$
	\leq \frac{\norm{\ElN}^2_2}{N(2\ell+1)}+ \lambda  \norm{\phib_\ell}_1,
	$$
	and finally, using the notation $v_\ell$, it holds that
	$$
	\frac{v_\ell 'X_{\ell,N}'X_{\ell,N}v_\ell}{N(2\ell+1)}-2\,\frac{v_\ell'X_{\ell,N}'\ElN}{N(2\ell+1)}+ \lambda \norm{\phiLASSO}_1\leq \lambda \norm{\phib_\ell}_1\,.
	$$
\end{proof}

\begin{proof}[Proof of Proposition \ref{DevCond}]
	First of all, we have
	$$
	\norm{\widehat\gamma_{\ell,N}-\widehat\Gamma_{\ell,N}\phib_\ell}_{\infty}=\frac 1{N(2\ell+1)}\norm{X_{\ell,N}'\ElN}_{\infty}=\max_{1\leq h \leq p} \abs{\frac{\mathbf{Y}^{\prime}_{\ell, N} (h)\ElN}{N(2\ell+1)}}\,.
	$$
	Now, using \eqref{eq:yeDB}, we obtain that, for any $\eta\geq0$ and $c>0$,
	\begin{equation*}
	\P \( \abs{\frac{\mathbf{Y}^{\prime}_{\ell, N} (h)\ElN}{N(2\ell+1)}} >   C_{\ell;Z} \( 1+ \frac{1+ \mu_{max; \ell}}{\mu_{\min; \ell}}\)\eta \) \le 6 \exp \( -c\, N(2\ell+1)\, \min\{\eta, \eta^2\}\)\,.
	\end{equation*}
	Thus it follows that
	\begin{equation*}
	\P \( \max_{1\leq h \leq p}\abs{\frac{\mathbf{Y}^{\prime}_{\ell, N} (h)\ElN}{N(2\ell+1)}} >   C_{\ell;Z} \( 1+ \frac{1+ \mu_{max; \ell}}{\mu_{\min; \ell}}\)\eta \) \\
	\le  6\, p \,\exp \( -c\, N(2\ell+1)\, \min\{\eta, \eta^2\}\)\,.
	\end{equation*}
	Since, for every $\ell < L_N$,
	$$
	c_0 C_{\ell;Z} \( 1+ \frac{1+ \mu_{max; \ell}}{\mu_{\min; \ell}} \) \le \mathcal{F}_N,
	$$
	we have 
	$$
	\P \( \max_{\ell < L_N}\max_{1\leq h \leq p}\abs{\frac{\mathbf{Y}^{\prime}_{\ell, N} (h)\ElN}{N(2\ell+1)}} >  \frac{1}{c_0} \mathcal{F}_N \eta \) \\
	\le  6\, p \, L_N  \,\exp \( -c\, N \, \min\{\eta, \eta^2\}\)\,.
	$$
	Hence, for $\eta = c_0 \sqrt{\frac{\log p L_N }{N}}$ 
	\begin{flalign*}
	&\P\(\bigcap_{\ell = 0}^{L_N -1} \norm{\widehat\gamma_{\ell,N}-\widehat\Gamma_{\ell,N}\phib_\ell}_{\infty} \leq  \mathcal{F} _N \sqrt{\frac{\log p L_N }{N}} \)\\
	&\qqqquad\qqqquad\qqqquad=\P\(\max_{\ell < L_N} \norm{\widehat\gamma_{\ell,N}-\widehat\Gamma_{\ell,N}\phib_\ell}_{\infty} \leq  \mathcal F _N \sqrt{\frac{\log p L_N }{N}} \)\\
		&\qqqquad\qqqquad\qqqquad=\P \( \max_{\ell < L_N}  \max_{1\leq h \leq p} \abs{\mathbf{Y}_{\ell,N}(h)'\ElN} \leq   \mathcal F _N \sqrt{\frac{\log p L_N }{N}} \) \\
		&\qqqquad\qqqquad\qqqquad\ge 1-6\,p \, L_N \,\exp \( -c\,N \,\min\{c_0, c_0^2\}\,\frac{\log pL_N}{N}\)\\
		&\qqqquad\qqqquad\qqqquad= 1-6\,p \, L_N \, e^{ -c\,\min\{c_0, c_0^2\}\,\log p L_N}\\
		&\qqqquad\qqqquad\qqqquad= 1-6\,e^{-\(c\,\min\{c_0, c_0^2\}-1\)\log p L_N}\,,
	\end{flalign*}
	and the statement is proved with $c_1=6$, $c_2=c\,\min\{c_0, c_0^2\}-1$, where $c_0$ is any positive constant that satisfies $c_2>0$.
\end{proof}

\begin{proof}[Proof of Proposition \ref{RE_thm}]
	Our goal is to prove \eqref{eq:RE_thm}, which can be rewritten as follows
	$$
	\P \( \bigcap_{\ell=0 }^{L_N -1} A_\ell \) \ge 1-c_1\,e^{-c_2\,N \,\min\{\omega^{-2},1\}}\,,
	$$
	where $A_\ell = \{ v_\ell' \widehat\Gamma_{\ell,N} v_\ell \ge \alpha_\ell\norm{v_\ell}_2^2-\tau_\ell \norm{v_\ell }_1^2, \ \forall \, v_\ell \in \mathbb{R}^p \}$.
	
	We start from Equation \eqref{eq:vvDB},
	and considering that
	$$2\pi\mathcal{M}(\widetilde f_\ell, r )\le 2\pi\mathcal{M}(\widetilde f_\ell)\le p\,2\pi\mathcal{M}(f_\ell)\le p\,\frac{C_{\ell;Z}}{\mu_{\min; \ell}}, \quad \rr \ge 1,$$ we have 
	$$
	\P \(\abs{ v_\ell' \( \widehat\Gamma_{\ell,N} - \Gamma_\ell \) v_\ell } > \frac{p\,C_{\ell;Z}}{\mu_{\min; \ell}} \eta \) \le 2e^{-c\,N(2\ell+1)\, \min\{ \eta^2, \eta\}} \,.
	$$
	Using Lemma F.2 in the supplementary material of \cite{BM:15} yields
	\begin{flalign*}
	&\P \(\sup_{v_\ell\in \mathscr K(2s)} \abs{v_\ell ' \( \widehat\Gamma_{\ell,N} - \Gamma_\ell \) v_\ell }>\eta\,\frac{p\,C_{\ell;Z}}{\mu_{\min; \ell}}\)\leq 2e^{-cN(2\ell+1)\min\{ \eta^2, \eta\}+2s\min\{\log p,\log\(\frac{21\,e\,p}{2s}\)\}}\,,
	\end{flalign*}
	where $\mathscr K(2s) = \{ v \in \mathbb{R}^p: \norm{v}_2 \le 1, \, \norm{v}_0 \le 2s \}$, for an integer $s \ge 1$.
	Now we set 
	$$
	c_3=54  \quad \eta=\omega_\ell^{-1}=\frac{1}{54\,p}\frac{\mu_{\min; \ell}}{\mu_{\max; \ell}},
	$$
	to obtain
	\begin{flalign*}
	&\P \(\sup_{v_\ell \in \mathscr K(2s)} \abs{v_\ell' \( \widehat\Gamma_{\ell,N} - \Gamma_\ell \) v_\ell} \le \frac1{54}\,\frac{C_{\ell;Z}}{\mu_{\max; \ell}}\)\ge 1- 2e^{-cN(2\ell+1)\min\{ 1, \omega_\ell^{-2}\}+2s\min\{\log p,\log\(\frac{21\,e\,p}{2s}\)\}},
	\end{flalign*}
	and apply Lemma 12 of the supplementary material of \cite{LW:12} to get
	\begin{align*}
	&\P \(\abs{v_\ell' \( \widehat\Gamma_{\ell,N} - \Gamma_\ell \) v_\ell} \le \frac12\,\frac{C_{\ell;Z}}{\mu_{\max; \ell}}\{\norm{v_\ell}_2^2+\frac1s\norm{v_\ell}_1^2\}, \ \forall \, v_\ell \in \mathbb{R}^p\)
	\\& \qqqquad\qqqquad\qqqquad\ge 1- 2e^{-cN(2\ell+1)\min\{ 1, \omega_\ell^{-2}\}+2s\min\{\log p,\,\log\(\frac{21\,e\,p}{2s}\)\}}\,.
	\end{align*}	
Moreover, it holds that
	\begin{flalign*}
	\abs{v_\ell' \( \widehat\Gamma_{\ell,N} - \Gamma_\ell \) v_\ell}&=\abs{v_\ell '\widehat\Gamma_{\ell,N} v_\ell - v_\ell'\Gamma_\ell v_\ell}=\abs{v_\ell'\Gamma_\ell v_\ell-v_\ell'\widehat\Gamma_{\ell,N} v_\ell}\geq\abs{v_\ell'\Gamma_\ell v_\ell}-\abs{v_\ell'\widehat\Gamma_{\ell,N} v_\ell}\\
	&\geq\Lambda_{\min}\(\Gamma_\ell\)\norm{v_\ell}_2^2-\abs{v_\ell'\widehat\Gamma_{\ell,N} v_\ell}\geq\frac{C_{\ell;Z}}{\mu_{\max;\ell}}\norm{v_\ell}_2^2-\abs{v_\ell'\widehat\Gamma_{\ell,N} v_\ell} \\&
	=\frac{C_{\ell;Z}}{\mu_{\max;\ell}}\norm{v_\ell}_2^2-v_\ell'\widehat\Gamma_{\ell,N} v_\ell\,,
	\end{flalign*}
	which implies that 
	\begin{align*}
	&\P \(\frac{C_{\ell;Z}}{\mu_{\max;\ell}}\norm{v_\ell}_2^2-v_\ell'\widehat\Gamma_{\ell,N} v_\ell \le \frac12\,\frac{C_{\ell;Z}}{\mu_{\max;\ell}}\{\norm{v_\ell}_2^2+\frac1s\norm{v_\ell}_1^2\} , \ \forall \, v_\ell \in \mathbb{R}^p \)\\& \qqqquad\qqqquad\qqqquad
	\ge 1- 2e^{-cN(2\ell+1)\min\{ 1, \omega_\ell^{-2}\}+2s\min\{\log p,\,\log\(\frac{21\,e\,p}{2s}\)\}}\,.
	\end{align*}
	Hence, we can rearrange the terms in the previous relation to have
	\begin{align*}
	&\P \(v_\ell'\widehat\Gamma_{\ell,N} v_\ell \ge \frac12\,\frac{C_{\ell;Z}}{\mu_{\max;\ell}}\norm{v_\ell}_2^2-\frac1{2s}\,\frac{C_{\ell;Z}}{\mu_{\max;\ell}}\norm{v_\ell}_1^2, \  \forall\, v_\ell \in \mathbb{R}^p\)
	\\& \qqqquad\qqqquad\qqqquad\ge 1- 2e^{-cN(2\ell+1)\min\{ 1, \omega_\ell^{-2}\}+2s\min\{\log p,\,\log\(\frac{21\,e\,p}{2s}\)\}}\, .
	\end{align*}
	Now, taking $\omega_N = \max_{\ell < \LN}  \omega_\ell $,
	we obtain
	\begin{flalign*}
	\P \( \bigcup_{\ell=0 }^{L_N -1} \overline{A}_\ell \) 
	& \le \sum_{\ell=0}^{L_N-1} \[ 1- \P \( A_\ell  \)\] \\
	&\le  2e^{-cN\min\{ 1, \omega_N^{-2}\}+2s\min\{\log pL_N,\,\log\(\frac{21\,e\,p L_N }{2s}\)\}}\,,
	\end{flalign*}
	which is the desired conclusion, once we set 
	$$
	s=\frac{c\,N\,\min\{\omega_N^{-2},1\}}{4\log pL_N}\, .
	$$ 
\end{proof}



\begin{proof}[Proof of Theorem \ref{th:MR-preciso}]
	Set $v_{\ell}=\phiLASSO-\phib_\ell$. Using the basic inequality \eqref{eq:BI} and the deviation condition \eqref{DB}, we obtain that, almost surely, 
	\begin{flalign}
	v_{\ell}' \widehat\Gamma_{\ell,N} v_{\ell} &\le 2v_{\ell}'\(\widehat\gamma_{\ell,N}- \widehat\Gamma_{\ell,N}\phib_\ell\)+\lambda_{N}\(\norm{\phib_\ell}_1-\norm{\phib_\ell+v_{\ell}}_1\)\notag\\
	&\le 2\norm{v_{\ell}}_1\norm{\widehat\gamma_{\ell,N}- \widehat\Gamma_{\ell,N}\phib_\ell}_{\infty}+\lambda_{N}\(\norm{\phib_\ell}_1-\norm{\phib_\ell+v_{\ell}}_1\)\notag\\
	&\le 2\norm{v_{\ell}}_1 \mathcal{F}_N\, \sqrt{\frac{\log (pL_N)}{N}}+\lambda_{N}\(\norm{\phib_\ell}_1-\norm{\phib_\ell+v_{\ell}}_1\)\,.\label{bound1}
	\end{flalign}
	Now, let $J=\text{supp}(\phib_\ell)=\{j_1,\dots,j_{\ql}\}$ be such that $\abs{J}=\ql$, then $J^c=\{1,\dots,p\}\setminus J $, $\norm{\phib_{\ell,J}}_1=\norm{\phib_\ell}_1$ and $\norm{\phib_{\ell,J^c}}_1=0$. Consequently, it holds that
	\begin{flalign*}
	\norm{\phib_\ell+v_{\ell}}_1&= \norm{\phib_{\ell,J}+v_{\ell,J}}_1+\norm{v_{\ell,J^c}}_1\\&\ge  \norm{\phib_{\ell,J}}_1-\norm{v_{\ell,J}}_1+\norm{v_{\ell,J^c}}_1\,,
	\end{flalign*}
	which implies 
	\begin{flalign*}
	\lambda_{N}\(\norm{\phib_\ell}_1-\norm{\phib_\ell+v_{\ell}}_1\)&\le \lambda_{N}\(\norm{\phib_{\ell,J}}_1- \norm{\phib_{\ell,J}}_1+\norm{v_{\ell,J}}_1-\norm{v_{\ell,J^c}}_1\)\le \lambda_{N}\(\norm{v_{\ell,J}}_1-\norm{v_{\ell,J^c}}_1\)\,.
	\end{flalign*}

	Having explicitly required that $\lambda_{N} \ge4  \mathcal{F}_N\, \sqrt{\log (pL_N)/N}$, Equation \eqref{bound1} becomes
	\begin{flalign}\label{bound2}
	0\le v_{\ell}' \widehat\Gamma_{\ell,N} v_{\ell} &\le \frac{\lambda_{N}}{2}\norm{v_{\ell}}_1 +\lambda_{N}\(\norm{v_{\ell,J}}_1-\norm{v_{\ell,J^c}}_1\) \notag\\
	&=\frac{\lambda_{N}}{2}\(\norm{v_{\ell,J}}_1+\norm{v_{\ell,J^c}}_1\) +\lambda_{N}\(\norm{v_{\ell,J}}_1-\norm{v_{\ell,J^c}}_1\) \notag\\
	&=\frac{3\lambda_{N}}{2}\norm{v_{\ell,J}}_1-\frac{\lambda_{N}}2 \norm{v_{\ell,J^c}}_1 \le \frac{3}{2} \lambda_{N} \norm{v_{\ell}}_1\,.
	\end{flalign}
	This ensures that $\norm{v_{\ell,J^c}}_1\le 3\,\norm{v_{\ell,J}}_1$ and hence, adding $\norm{v_{\ell,J}}_1$ on both sides, that $\norm{v_{\ell}}_1\le 4\,\norm{v_{\ell,J}}_1$, which implies that
	$$
	\norm{v_{\ell}}_1\le 4\,\sqrt{\ql} \,\norm{v_{\ell}}_2\,,
	$$
	from Cauchy-Schwartz inequality.
	
	Now we use this property into the (RE) inequality \eqref{RE}, keeping in mind that we specifically required that $\ql \tau_\ell \le\alpha_\ell/32$, and we obtain
	\begin{equation*}
	v_{\ell}' \widehat\Gamma_{\ell,N} v_{\ell} \ge \alpha_\ell \norm{v_{\ell}}_2^2 - \tau_\ell \norm{v_{\ell}}_1^2 \ge  \alpha_\ell \norm{v_{\ell}}_2^2 - 16 \ql \tau_\ell \norm{v_{\ell}}_2^2 
	\end{equation*}
	\begin{equation}\label{bound3}
	\ge \alpha_\ell  \norm{v_{\ell}}_2^2 - \frac{\alpha_\ell}{2}  \norm{v_{\ell}}_2^2 \ge \frac{\alpha_\ell}{2}  \norm{v_{\ell}}_2^2\,.
	\end{equation}
	Hence, combining Equation \eqref{bound2} and \eqref{bound3}, we get 
	\begin{flalign*}
	\frac{\alpha_\ell}{2}  \norm{v_{\ell}}_2^2 \le v_{\ell}' \widehat\Gamma_{\ell,N} v_{\ell}  \le \frac{3}{2} \lambda_{N} \norm{v_{\ell}}_1 \le 6 \sqrt \ql \,\lambda_N \norm{v_{\ell}}_2\,,
	\end{flalign*}
	which results in the following estimate for the norm of the error
	\begin{flalign*}
	\frac{\alpha_\ell}{3}  \norm{v_{\ell}}_2^2 \le \lambda_{N} \norm{v_{\ell}}_1 \le \lambda_N  4 \sqrt {\ql} \,\norm{v_{\ell}}_2\,.
	\end{flalign*}
	As a consequence,
	\begin{flalign}
	&\norm{v_{\ell}}_2 \le 12 \sqrt {\ql} \, \frac{\lambda_{N}}{\alpha_\ell}\,, \label{EstPredError}\\
	&\norm{v_{\ell}}_1  \le 4 \sqrt {\ql} \,\norm{v_{\ell}}_2 \le 48\,\ql \, \frac{\lambda_{N}}{\alpha_\ell} \notag \,,\\
	&v_{\ell}' \widehat\Gamma_{\ell,N} v_{\ell}  \le \frac{3}{2} \lambda_{N} \norm{v_{\ell}}_1 \le 72\, \frac{\lambda_{N}^2}{\alpha_\ell}\,.\notag
	\end{flalign}

	
	It is readily seen that	
	\begin{align*}
	\norm{\widehat{\mathbf{k}}^{\textnormal{lasso}}_N-\mathbf{k}}^2_{L^2} &= \norm{\widehat{\mathbf{k}}^{\textnormal{lasso}}_N-\mathbf{k}_N}^2_{L^2}+\Big \| \ka - \ka_{N} \Big \|^2_{L^2}\\
	&=   \sum_{\ell =0}^{L_N} \sum_{\ell' =0}^{L_N} \left \langle \phiLASSO - \boldsymbol{\phi}_\ell, \phiLASSOpr - \boldsymbol{\phi}_{\ell'}\right \rangle    \int_{-1}^1 \frac{2\ell +1}{4\pi} P_\ell (z)   \frac{2\ell' +1}{4\pi} P_{\ell'} (z) \diff z +\Big \| \ka - \ka_{N} \Big \|^2_{L^2} \\
	&= \sum_{\ell = 0}^{\LL} \left \| \phiLASSO - \boldsymbol{\phi}_\ell \right\|_2^2 \frac{2\ell +1}{8\pi^2}  +\Big \| \ka - \ka_{N} \Big \|^2_{L^2}\,,
\end{align*}
by the orthonormality of Legendre polynomials (see \eqref{eq:duplication}), and that
	\begin{equation*}
	\norm{\widehat{\mathbf{k}}^{\textnormal{lasso}}_N-\mathbf{k}}_{L^\infty}  \le \sum_{\ell =0}^{\LL} \left \| \phiLASSO - \boldsymbol{\phi}_\ell \right \|_2\frac{2\ell +1}{4\pi} +  \Big \| \ka - \ka_{N} \Big \|_{L^\infty},
	\end{equation*}
	using the triangle inequality. Recalling that $\phiLASSO - \boldsymbol{\phi}_\ell=v_\ell$, by \eqref{EstPredError}, the proof is concluded.
	
\end{proof}
\section*{Acknowledgement} 
The authors wish to thank Domenico Marinucci for many insightful discussions and suggestions. AV acknowledges the MIUR Excellence Department Project awarded to the Department of Mathematics, University of Rome Tor Vergata, CUP E83C18000100006.

\bibliography{Bibliography}
\end{document}